\begin{document}
\title{Large values of Hecke-Maass $L$-functions with prescribed argument}
\author{Alexandre Peyrot}
\address{EPFL\\ SB MATHGEOM TAN\\ MA C3 614, Station 8\\ CH-1015 Lausanne\\ Switzerland}
\email{alexandre.peyrot@epfl.ch}

\maketitle

\def \l {{\lambda}}
\def \a {{\alpha}}
\def \b {{\beta}}
\def \f {{\phi}}
\def \r {{\rho}}
\def \R {{\mathbb R}}
\def \H {{\mathbb H}}
\def \N {{\mathbb N}}
\def \C {{\mathbb C}}
\def \Z {{\mathbb Z}}
\def \F {{\Phi}}
\def \Q {{\mathbb Q}}
\def \e {{\epsilon }}
\def\GL{\ensuremath{\mathop{\textrm{\normalfont GL}}}}
\def\SL{\ensuremath{\mathop{\textrm{\normalfont SL}}}}
\def\Gal{\ensuremath{\mathop{\textrm{\normalfont Gal}}}}
\def\SU{\ensuremath{\mathop{\textrm{\normalfont SU}}}}
\def\SO{\ensuremath{\mathop{\textrm{\normalfont SO}}}}

\newtheorem{prop}{Proposition}
\newtheorem{claim}{Claim}
\newtheorem{lemma}{Lemma}
\newtheorem{thm}{Theorem}
\newtheorem{ktf}{\textit{Kuznetsov Trace Formula}}
\newtheorem{defn}{Definition}

\theoremstyle{definition}
\newtheorem{exmp}{Example}

\theoremstyle{remark}
\newtheorem{rmk}{Remark}

\newcommand{\mods}[1]{\,(\mathrm{mod}\,{#1})}

\begin{abstract}
We investigate the existence of large values of $L$-functions attached to Maass forms on the critical line with prescribed argument. The results obtained rely on the resonance method developed by Soundararajan and furthered by Hough.
\end{abstract}

\setcounter{tocdepth}{1}
\tableofcontents

\section{Introduction and Setup}
The resonance method developed by Soundararajan \cite{Soundres} allows the detection of large values of certain $L$-functions on the critical line. Building on this work, Hough \cite{Hough} proves the existence of large values of the Riemann zeta function on the critical line with prescribed argument. In this paper we extend the resonance method to find large values of Hecke-Maass $L$-functions on the critical line with prescribed argument. More precisely, we let $f$ be an (even) Hecke-Maass eigenform for $\SL_2(\Z)$, and denote by $1/4 + r^2$ the associated eigenvalue of the Laplacian. We define the Hecke operators $(T_n)_{n\geq 1}$ acting on the space of Maass forms by 
$$(T_n f)(z) = \frac{1}{\sqrt{n}} \sum_{ad=n} \sum_{0\leq b <d} f\left(\frac{az+b}{d}\right).$$
We associate to $f$ the sequence of Hecke-eigenvalues $(\lambda_f(n))_{n\geq 1}$. We define the associated $L$-function,
$$L(f,s):= \sum_n \frac{\l_f(n)}{n^s}= \prod_p (1-\alpha_pp^{-s})^{-1}(1-\beta_pp^{-s})^{-1},$$
where $\alpha_p, \beta_p$ are given via $\alpha_p+\beta_p = \lambda_f(p)$ and $\alpha_p\beta_p=1$. We prove the following theorem.
\begin{thm}\label{peythm}
For any $\eta <1$, any sufficiently large $T\in \R$ and any $\theta\in \R/\Z$, there exists $t\in[\frac{T}{2},2T]$ such that
$$\frac{1}{2\pi} \arg L\left(f,\frac{1}{2}+it\right) \equiv \theta \mod \Z, \hbox{ and } \log\left|L\left(f,\frac{1}{2}+it\right)\right| \geq (\eta+o(1)) \sqrt{\frac{\log T}{\log\log T}}.$$
\end{thm}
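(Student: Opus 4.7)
The plan is to follow Soundararajan's resonance method together with Hough's Fourier-detection refinement, adapted to the GL(2) setting. First I would fix a smooth nonnegative bump $\Phi$ supported in $[1/2,2]$ and build a resonator Dirichlet polynomial $R(t)=\sum_{n\le N}r(n)n^{-it}$ with $N=T^{1-\delta}$, whose coefficients $r(n)$ are multiplicative, supported on squarefree integers, and calibrated as in Soundararajan by $r(p)\approx \l_f(p)L/(\sqrt p\log p)$ on a prime band $[L^2,e^LL^2]$ with $L=\sqrt{\log T/\log\log T}$. The standard resonator first-moment computation (evaluating $\int\Phi(t/T)|R(t)|^2L(f,1/2+it)\,dt$ by the approximate functional equation together with the Dirichlet series of $|R|^2$, and the second moment $\int\Phi(t/T)|R(t)|^2 dt$ diagonally) yields
$$\frac{\int_{\R}\Phi(t/T)|R(t)|^2L(f,1/2+it)\,dt}{\int_{\R}\Phi(t/T)|R(t)|^2\,dt}\ge\exp\bigl((1+o(1))L\bigr),$$
which produces large values of $|L(f,1/2+it)|$ in $[T/2,2T]$ but with no control on the argument.

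To prescribe the argument, I would use the Hardy decomposition $L(f,1/2+it)=e^{i\a(t)}Z_f(t)$ coming from the functional equation $L(f,s)=X(s)L(f,1-s)$, where $\a(t)=-\tfrac12\arg X(1/2+it)$ is smooth and $Z_f(t)\in\R$. Then $\arg L(f,1/2+it)=\a(t)+\pi\cdot\mathbf{1}_{\{Z_f(t)<0\}}$, so prescribing $\arg L/(2\pi)\pmod\Z$ amounts to prescribing both the smooth condition $\a(t)\pmod\pi$ and the sign of $Z_f(t)$. Fixing a nonnegative smooth $\psi:\R/\Z\to\R$ concentrated near $0$, I would expand $\psi(\arg L/(2\pi)-\theta)$ as a Fourier series and consider the twisted moments
$$J_k(T):=\int_{\R}\Phi(t/T)|R(t)|^2L(f,1/2+it)\,e^{ik\arg L(f,1/2+it)}\,dt.$$
The $k=0$ contribution is the Soundararajan main term above, so the remaining task is to show $\sum_{k\ne 0}\hat\psi(k)e^{-2\pi ik\theta}J_k(T)=o(J_0(T))$.

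Using the identity $L\cdot e^{ik\arg L}=e^{i(k+1)\a(t)}|Z_f(t)|\,\mathrm{sgn}(Z_f(t))^{k+1}$, $J_k(T)$ becomes $\int\Phi(t/T)|R(t)|^2Z_f(t)e^{ik\a(t)}dt$ when $k$ is even and $\int\Phi(t/T)|R(t)|^2|Z_f(t)|e^{ik\a(t)}dt$ when $k$ is odd. For even $k\ne 0$, writing $Z_f(t)=e^{-i\a(t)}L(f,1/2+it)$ and expanding via the approximate functional equation and the Dirichlet series of $|R|^2$ reduces $J_k(T)$ to a sum of oscillatory integrals with phase $k\a(t)+t\log(n/mh)$; since $\a'(t)\sim\tfrac12\log(t/2\pi)$, the stationary point $t^*=2\pi(n/mh)^{2/k}$ falls outside the support of $\Phi(\cdot/T)$ for all relevant $m,n$ (of size $\le N$) and $h$ (of size $\le T^{1+\e}$), so repeated integration by parts delivers $J_k(T)=O_A(T^{-A})$ for any $A$.

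The main technical obstacle is the odd-$k$ case, in which $|Z_f(t)|$ does not admit a direct Dirichlet expansion. I would address this by one of two routes: either write $|Z_f(t)|=\sqrt{L(f,1/2+it)\overline{L(f,1/2+it)}}$ and treat the square root via contour integration against a suitable smooth cutoff, reducing to joint moments of $L$ and $\bar L$ that succumb to the same oscillatory analysis; or detect the argument modulo $\pi$ first (which involves only even Fourier modes and hence avoids $|Z_f|$ entirely), then upgrade to modulo $2\pi$ by establishing, via density estimates on the sign changes of $Z_f(t)$ weighted by $|R(t)|^2$, that both signs of $Z_f$ occur in positive proportion among the resonator-amplified large values. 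Once $J_k(T)=o(J_0(T)/|\hat\psi(k)|)$ is established uniformly for $k\ne 0$ up to a slowly growing truncation, the Fourier sum collapses to $\hat\psi(0)J_0(T)(1+o(1))$, and positivity of the weighted average forces the existence of $t\in[T/2,2T]$ with both $\arg L(f,1/2+it)/(2\pi)\equiv\theta\pmod\Z$ and $|L(f,1/2+it)|\ge\exp((\eta+o(1))\sqrt{\log T/\log\log T})$.
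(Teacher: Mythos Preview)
Your framework is in the right spirit but diverges from the paper's in a way that leaves the crucial step unaddressed. The paper does not Fourier-expand a bump $\psi(\arg L/2\pi-\theta)$ over a continuous $t$-average. Instead, following Hough, it works with the \emph{discrete} set $T_\theta=\{t:\Delta(it)=e^{2i\theta}\}$ where $\arg L\equiv\theta\pmod\pi$, and evaluates sums over $T_\theta$ as contour integrals against $\Delta'(s)/(\Delta(s)-e^{2i\theta})$. On this set it compares the \emph{unsigned} moment $\sum_{t\in T_\theta}|L|\omega$ with the \emph{signed} moment $\sum_{t\in T_\theta}L\,\omega$; since $\tfrac12(|L|+e^{-i\theta}L)$ isolates the points with $\arg L=\theta$ exactly, a gap between these two moments gives the theorem.

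You correctly locate the difficulty at the odd-$k$ / sign-of-$Z_f$ step, but neither of your proposed routes resolves it. Route~1 (handling $\sqrt{L\bar L}$ by contour integration) is not a known technique and would need substantial new input. Route~2 (detect mod $\pi$, then argue both signs of $Z_f$ occur among amplified values) is circular as stated: ``density estimates on sign changes weighted by $|R|^2$'' is precisely the statement you are trying to prove, and you give no mechanism to obtain it.

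The missing idea is a \emph{mollifier}. The paper takes $A_{1/2}(s)=\sum_{n\le T^\xi}d_{1/2}(n)\lambda_f(n)n^{-s}$, a short truncation of $L(f,s)^{1/2}$, and builds the resonator as $R=R^*\cdot A_{1/2}$. The unsigned moment is then lower-bounded by the triangle inequality,
\[
\sum_{t\in T_\theta}|L|\,\omega \;\ge\; \Bigl|\sum_{t\in T_\theta} L(f,\tfrac12-it)\,\frac{A_{1/2}(\tfrac12+it)^2}{|A_{1/2}(\tfrac12+it)|^2}\,\omega\Bigr|,
\]
which is a \emph{first}-moment quantity (no absolute value left) that can be evaluated by contour shifting and Dirichlet-series manipulations. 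Because $A_{1/2}^2\approx L$, this picks up an extra $(\log T)^{1/4}$ over the signed moment: one gets $\sum|L|\omega\gg(\log T)^{3/4}P$ versus $\sum L\,\omega\ll(\log T)^{1/2}P$, with $P=\prod_p(1+r(p)\lambda_f(p)^2/\sqrt p)$. That $(\log T)^{1/4}$ gap is what forces the correct sign of $Z_f$ to carry the large values. Your proposal contains no analogue of this device, and without it the sign problem remains open.

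A secondary point: the GL(2) setting introduces genuine complications beyond Hough's $\zeta$-case. The sums of $d_{1/2}(n)\lambda_f(n)$ and their convolutions cannot be handled combinatorially; the paper relates them to fractional powers of $L(\mathrm{sym}^2 f,s)$ and invokes a zero-free region together with a Selberg--Delange type lemma. Your sketch treats the Dirichlet-series manipulations as routine, but this is where much of the actual work lies.
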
 
We follow Hough's strategy \cite{Hough}, namely we exploit sign changes of $L(f,s)$ by comparing the weighted signed moment and unsigned first moment, which we define in the next section. Several substantial complications, however, arise due to the fact that $L(f,s)$ is of degree 2. We may no longer exploit combinatorial arguments to handle sums of fractional divisor functions. We treat these sums by relating them to the symmetric square $L$-function, $L(\hbox{sym}^2 f, s)$, and exploiting a zero-free region. 

We note that the results presented also hold for holomorphic cusp forms, as they exhibit the same properties as those exploited for Maass forms. Moreover, we expect that the methods are flexible enough to carry over to the case of  Maass forms of $\SL_n(\Z)$ \footnote{Either for self-dual forms, or in the case that the form satisfies Ramanujan-Petersson by a recent non-zero region due to Goldfeld and Li \cite{goldfeldli}.} , by some more elaborate calculations.

\subsection{Outline of proof}
Following \cite{Hough}, we implement the resonance method developed in \cite{Soundres}. We thus let $T$ be a large real number and $\theta \in \R$ be a fixed angle. Let $\xi>0$ be a  small real number and let $N= T^{1-3\xi}$. We set $L= \sqrt{\log N\log\log N}$, and define the multiplicative function, $r(n)$, which is supported on square-free integers and defined at primes by
$$r(p) = \left\{\begin{array}{ll} \frac{L}{\sqrt{p}\log p}, & \hbox{ if } L^2\leq p\leq \exp((\log L)^2)\\ 0, & \hbox{ otherwise } \end{array}\right. .$$
We define a preliminary resonating polynomial,
$$R^*(s) = \sum_{n\leq N} \frac{r(n)\l_f(n)}{n^s}.$$
We also introduce a short Dirichlet polynomial, 
$$A_{1/2}(s):= \sum_{n\leq T^\xi} \frac{d_{1/2}(n) \l_f(n)}{n^s},$$
where $d_{1/2}$ are the Dirichlet series coefficients for $\zeta^{1/2}$. In particular $d_{1/2}$ is multiplicative, non-negative, and is given at prime powers by
\begin{equation*}
d_{1/2}(p^k) =  \frac{1}{2^k k!} \prod_{i=1}^k (2i-1).
\end{equation*}
We define our resonating polynomial to be
$$R(s)= R^*(s) A_{1/2}\left(\frac{1}{2}+s\right) =: \sum_{n\leq T^{1-2\xi}} \frac{a_n}{n^s}.$$
In order to prove Theorem \ref{peythm} we compute weighted first moments of $L(f,\frac{1}{2} +it)$. Namely, we let
$$T_\theta := \left\{t\in\R|\arg\left(L\left(f,\frac{1}{2}+it\right)\right) \equiv \theta \mods \pi \right\},$$
and letting $H= T/(\log T)^2$, we define
$$\omega_{T,\theta}(t) = \frac{|R(it)|^2}{\cosh\left(\frac{t-T}{H}\right)}\bigg/NW,$$
where
\begin{equation}\label{normweight}
NW:= \sum_{t \in T_\theta} \frac{|R(it)|^2}{\cosh\left(\frac{t-T}{H}\right)},
\end{equation}
 is the normalizing weight required to obtain a probability measure. Theorem \ref{peythm} will be deduced from the following proposition.
\begin{prop}\label{prop1}
We have 
\begin{equation}\label{unsignedmoment}
\sum_{t\in T_\theta} \left|L\left(f,\frac{1}{2}+it\right)\right| \omega_{T,\theta}(t) \gg (\log T)^{\frac{3}{4}} \prod_p \left(1+ \frac{r(p)\l_f^2(p)}{\sqrt{p}}\right),
\end{equation}
and
\begin{equation}\label{signedmoment}
\sum_{t\in T_\theta} L\left(f,\frac{1}{2}+it\right) \omega_{T,\theta}(t) \ll (\log T)^{\frac{1}{2}} \prod_p \left(1+ \frac{r(p)\l_f^2(p)}{\sqrt{p}}\right).
\end{equation}
\end{prop}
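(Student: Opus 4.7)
The plan is to follow Hough's strategy and convert the discrete sums over $T_\theta$ into smooth integrals along $[T/2, 2T]$ with oscillatory corrections. Using the functional equation for $L(f, s)$, I write $L(f, 1/2+it) = Z_f(t)\,e^{-i\vartheta_f(t)}$, where $Z_f$ is real-valued and $\vartheta_f$ is the monotone theta-function coming from the Gamma-factors, so that $T_\theta$ coincides with $\{t : \vartheta_f(t) + \theta \in \pi\Z\}$, a discrete set of mean spacing $\pi/\vartheta_f'(t) \sim 2\pi/\log t$. Poisson summation in the $\vartheta_f$-coordinate then gives, for any smooth $h$,
$$\sum_{t_k \in T_\theta} h(t_k) \;=\; \frac{1}{\pi}\sum_{m \in \Z}\int h(t)\,\vartheta_f'(t)\,e^{-2im(\vartheta_f(t)+\theta)}\,dt.$$
Both inequalities of Proposition~\ref{prop1} will be obtained by applying this identity and analysing the $m = 0$ main term separately from the $m \ne 0$ tails.

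For the signed moment (\ref{signedmoment}), I take $h(t) = L(f, 1/2+it)\,|R(it)|^2/\cosh((t-T)/H)$. The $m = 0$ integral is analysed by expanding $|R(it)|^2$ as a Dirichlet polynomial and $L(f, 1/2+it)$ via an approximate functional equation: off-diagonal frequencies oscillate on scale $1/H$ and are suppressed by the smooth weight, while the diagonal contribution produces the Euler product $\prod_p(1 + r(p)\lambda_f^2(p)/\sqrt{p})$ multiplied by an $A_{1/2}$-derived factor of size $(\log T)^{1/2}$. For the $m \ne 0$ terms, using $L\,e^{-2im(\vartheta_f+\theta)} = Z_f\,e^{-i(2m+1)\vartheta_f}\,e^{-2im\theta}$, the oscillation of $e^{-i(2m+1)\vartheta_f(t)}$ combined with the rapidly decaying $\mathrm{sech}((t-T)/H)$ and repeated integration by parts (or equivalently a contour shift exploiting Stirling for $\vartheta_f$) makes the contribution for every $m \ne 0$ negligible.

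For the unsigned moment (\ref{unsignedmoment}), the obstruction is that $|L|$ is not analytic, so Dirichlet series techniques do not apply directly. On $T_\theta$ the identity $e^{-i\theta}L(f, 1/2+it) = \epsilon_t\,|L(f, 1/2+it)|$ with $\epsilon_t \in \{\pm 1\}$ holds. To remove the absolute value I would apply the reverse Cauchy--Schwarz inequality
$$\sum_{t \in T_\theta}|L(f, 1/2+it)|\,\omega_{T,\theta}(t) \;\geq\; \frac{\bigl|\sum_{t \in T_\theta} L(f, 1/2+it)\,\overline{R^*(it)}\,\omega_{T,\theta}(t)\bigr|^2}{\sum_{t \in T_\theta} |L(f, 1/2+it)|\,|R^*(it)|^2\,\omega_{T,\theta}(t)}.$$
The numerator is a twisted signed first moment, computable by the same Poisson + stationary-phase method used for (\ref{signedmoment}), but now with the additional $\overline{R^*}$ factor boosting the resonator Euler product by a second power. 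The denominator is dominated by Cauchy--Schwarz against a mollified second moment $\sum_{T_\theta}|L(f, 1/2+it)|^2\,|R^*(it)|^2\,\omega_{T,\theta}(t)$, itself computable by standard resonance methods. Combining these bounds, the ratio yields the lower bound $(\log T)^{3/4}\prod_p(1 + r(p)\lambda_f^2(p)/\sqrt{p})$, exceeding the signed moment bound by precisely the factor $(\log T)^{1/4}$ needed to conclude Theorem~\ref{peythm}.

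The main obstacle throughout is evaluating the diagonal sums appearing in both moments. Because $L(f, s)$ has degree $2$, the Hecke relation $\lambda_f(m)\lambda_f(n) = \sum_{d \mid (m, n)} \lambda_f(mn/d^2)$ and the sums of the fractional divisor function $d_{1/2}$ twisted by $\lambda_f$ no longer decouple combinatorially as they did for $\zeta$ in Hough's work. Following the hint in the introduction, I would identify the generating Dirichlet series of these sums as expressions involving $\zeta(s)$ and $L(\mathrm{sym}^2 f, s)$, via the Rankin--Selberg decomposition $L(f \otimes f, s) = \zeta(s)\,L(\mathrm{sym}^2 f, s)$, and then extract main terms by shifting contours into the region furnished by a classical zero-free region for $L(\mathrm{sym}^2 f, s)$. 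This substitutes for Hough's elementary combinatorial arguments and is the principal technical novelty of the present paper.
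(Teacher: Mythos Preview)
Your Poisson-summation framework is essentially the same device as the paper's contour integration: expanding $(\Delta(s)-e^{2i\theta})^{-1}$ as a geometric series in $\Delta(s)$ is the residue-theoretic dual of Poisson in the $\vartheta_f$-variable, and your handling of the $m\neq 0$ tails corresponds to Lemma~\ref{prelimlemma}. Your outline for the signed moment~(\ref{signedmoment}), and your remark that the diagonal sums must be analysed via $\zeta(s)L(\mathrm{sym}^2 f,s)$ and a zero-free region rather than combinatorially, are both in line with what the paper does.

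The gap is in the unsigned moment~(\ref{unsignedmoment}). Your Cauchy--Schwarz inequality is valid, but the denominator $\sum_{T_\theta}|L|\,|R^*|^2\,\omega_{T,\theta}$ still carries the non-analytic factor $|L|$, and your proposed second Cauchy--Schwarz turns it into $\sum_{T_\theta}|L|^2\,|R^*|^2\,\omega_{T,\theta}$. Since $\omega_{T,\theta}$ already contains $|R|^2=|R^*|^2|A_{1/2}|^2$, this is a second moment of $L(f,\tfrac12+it)$ twisted by a Dirichlet polynomial of length $N^4T^{2\xi}=T^{4-10\xi}$, far beyond anything ``computable by standard resonance methods'' for a degree-$2$ $L$-function. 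Even your numerator $\sum_{T_\theta} L\,\overline{R^*}\,\omega_{T,\theta}$ involves a polynomial of length $N^3T^{2\xi}=T^{3-7\xi}$, which for small $\xi$ already exceeds the $T^{2-\delta}$ threshold of Lemma~\ref{prelimlemma}. The arithmetic of the exponents simply does not close.

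The paper never touches a second moment. The whole point of building $A_{1/2}$ into $R=R^*A_{1/2}$ is the step you are missing: one writes
\[
\bigl|L(f,\tfrac12+it)\bigr|\;=\;\Bigl|L(f,\tfrac12-it)\cdot \frac{A_{1/2}(\tfrac12+it)^2}{|A_{1/2}(\tfrac12+it)|^2}\Bigr|,
\]
multiplication by a unimodular factor, so that $\mathbf{E}_{\omega_{T,\theta}}[|L|]\geq \bigl|\mathbf{E}_{\omega_{T,\theta}}[L(f,\tfrac12-it)\,A_{1/2}^2/|A_{1/2}|^2]\bigr|$. The $|A_{1/2}|^2$ in the denominator cancels against the $|A_{1/2}|^2$ sitting inside $|R|^2$, and one is left with a \emph{first} moment of $L$ against $A_{1/2}(\tfrac12+s)^2R^*(s)R^*(-s)$, a Dirichlet polynomial of length $N^2T^{2\xi}=T^{2-4\xi}$, which does fit the hypotheses of Lemma~\ref{prelimlemma}. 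Heuristically $A_{1/2}\approx L^{1/2}$, so $\overline{L}\cdot A_{1/2}^2/|A_{1/2}|^2\approx |L|$; this mollifier device, inherited from Hough, is what replaces your Cauchy--Schwarz and produces the extra $(\log T)^{1/4}$ without any recourse to second moments.
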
 
We explain here the strategy that allows us to detect the angle of $L(f,s)$ thus allowing us to estimate these moments. Let
$$\Lambda(f,s) = L_\infty(s) L(f,s),$$
be the completed $L$-function of $f$, where
$$L_\infty(s):= \pi^{-s} \Gamma\left(\frac{s+ir}{2}\right) \Gamma\left(\frac{s-ir}{2}\right),$$
is the local factor at $\infty.$ The $L$-function satisfies the functional equation:
$$\Lambda(f,s)=\Lambda(f,1-s).$$
We let 
$$\Delta(s):= \frac{L\left(f,\frac{1}{2}+s\right)}{L\left(f,\frac{1}{2}-s\right)} =\frac{L_\infty\left(\frac{1}{2}-s\right)}{L_\infty\left(\frac{1}{2}+s\right)},$$
and observe that the points, $t$, such that  $\arg(L(f,\frac{1}{2}+it))=\theta \mods \pi$ are the solution set of $\Delta(it) =e^{2i\theta}.$ In particular, we note that $T_\theta$ is not empty. By the Residue Theorem, one may then express the moment as a contour integral of the form 
$$\int_\Gamma L\left(f,\frac{1}{2}+s\right) R(s)R(-s) \frac{\Delta'(s)}{\Delta(s)-e^{2i\theta}} \frac{\mathrm{d}s}{\cos\left(\frac{s-iT}{H}\right)},$$
where $\Gamma$ is an appropriate contour supported at height $T$. Expanding the $L$-function into its Dirichlet series we end up having to estimate sums of Hecke eigenvalues against certain arithmetic functions.

We end this section by showing how Theorem \ref{peythm} follows from Proposition \ref{prop1}. By Proposition \ref{prop1}, we have 
\begin{small}
\begin{align*}
\sum_{ \arg(L)= \theta} \left|L\left(f,\frac{1}{2}+it\right)\right| \omega_{T,\theta}(t) &= \frac{1}{2} \sum_{t\in T_\theta} \left(\left|L\left(f,\frac{1}{2}+it\right)\right| + e^{-i\theta} L\left(f,\frac{1}{2}+it\right)\right)\omega_{t,\theta}(t)\\
& \gg (\log T)^{3/4} \prod_p\left(1+\frac{r(p)\l_f(p)^2}{\sqrt{p}}\right),
\end{align*}
\end{small}
so that 
$$\max_{\substack{\frac{T}{2}\leq t \leq 2T\\ \arg(L)=\theta}} \left|L\left(f,\frac{1}{2}+it\right)\right| \gg  (\log T)^{3/4} \prod_p\left(1+\frac{r(p)\l_f(p)^2}{\sqrt{p}}\right).$$
Theorem \ref{peythm} now follows from 
\begin{align*}
\log \prod_p \left(1+\frac{r(p)\r_f(p)^2}{\sqrt{p}}\right)  & \sim L \sum_{L^2\leq p \leq \exp(\log^2 L)} \frac{\l_f(p)^2}{p \log p} \\
& \sim \sqrt{(1-3\xi)\frac{\log T}{\log\log T}},
\end{align*}
and letting $\xi \rightarrow 0$.

\subsection{Notations}
Throughout the paper, we will let $f(x) \ll g(x),$ $f(x) \gg g(x)$ and $f(x) = O(g(x))$ denote the usual Vinogradov symbols. The notation $f(x) \asymp g(x)$ will be used to mean that both $f(x) \ll g(x)$ and $g(x) \ll f(x)$ hold. The notation $f(x) \sim g(x)$ will be taken to mean that $\lim_{x\rightarrow \infty} f(x)/g(x) =1$. We will write $f(x) = o(g(x))$ to mean that $ \lim_{x\rightarrow \infty} f(x)/g(x) \rightarrow 0$. We also follow the convention that any $\epsilon$ appearing in the paper is defined to be an arbitrarily small unspecified positive real number, that might vary from one line to the other. Whenever we encounter a zero-free region for an $L$-function, we will take the $k$-th root of $L$ in that region to be the one defined so that $L^{1/k}\rightarrow 1$ as $L\rightarrow 1$ with $s \rightarrow \infty, s\in \R$.

\subsection{Acknowledgements}
I am grateful toward Philippe Michel for the guidance and support received throughout this project. I would also like to thank Pierre Le Boudec, Paul Nelson, Ramon M. Nunes, Yiannis Petridis  and Ian Petrow for useful comments and suggestions.

\section{Preliminary lemmas}
In order to estimate these moments, we will require some preliminary lemmas that we prove in this section. 

\begin{lemma}\label{prelimlemma}
Let $T$ be large, and $1\leq m,n$ and assume $m< T^{2-\delta}$, and $\min(m,n) < T^{1-\delta}$ for some $\delta >0$. We then have for any $\omega \in \mathbb{S}^1$ and for any $A>0$,
\begin{equation}\label{third}
\int_{\Re(s)=\frac{1}{2}+\epsilon} \left(\frac{m}{n}\right)^s \frac{\Delta'(s)}{\Delta(s)} \frac{\Delta(s)}{1-\omega\Delta(s)} \frac{\mathrm{d}s}{\cos\left(\frac{iT\pm s}{H}\right)} = O_{\delta,A}(T^{-A}).
\end{equation}
Letting 
$$I_T:= \int_{t\geq 20} \frac{-2 \Delta'(it)/\Delta(it)}{\cosh\left(\frac{t-T}{H}\right)} \mathrm{d}t,$$
we also have,
\begin{align}\label{second}
&\frac{1}{2\pi i} \int_{\Re(s)=\frac{1}{2}+\epsilon} \left(\frac{m}{n}\right)^s \frac{\Delta'(s)}{\Delta(s)} \frac{\mathrm{d}s}{\cos\left(\frac{iT\pm s}{H}\right)}  = -\frac{\delta_{m=n}}{4\pi} I_T + O_{\delta,A}(T^{-A}).
\end{align}
\end{lemma}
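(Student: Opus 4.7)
The plan is to deduce both formulas by contour shifts and the residue theorem, exploiting the crucial fact that $1/|\cos((iT\pm s)/H)|$ decays like $1/\cosh(T/H) = e^{-(1+o(1))(\log T)^2}$ whenever $\Im s$ is bounded away from $\mp T$. In particular, any residue picked up at a point of bounded imaginary part is automatically $O(T^{-A})$ for every $A>0$. The key analytic inputs will be Stirling's formula, which on $\Re s = \sigma$ with $|t|\gg \sigma$ gives $|\Delta(\sigma+it)| \asymp (|t|/2\pi)^{-2\sigma}$ and $|\Delta'(s)/\Delta(s)| \ll \log(|t|+\sigma)$, together with the functional-equation identities $\Delta(-s) = \Delta(s)^{-1}$ and $|\Delta(it)| = 1$ (which make $\Delta'/\Delta$ even and $\Delta'(it)/\Delta(it)$ real).

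For \eqref{third}, I would first use the algebraic identity $\frac{\Delta'(s)}{\Delta(s)}\cdot\frac{\Delta(s)}{1-\omega\Delta(s)} = \frac{\Delta'(s)}{1-\omega\Delta(s)}$. Since $|\Delta(s)|<1$ strictly in the right half-plane, $1-\omega\Delta(s)$ is bounded away from zero, and the only singularities to the right of the original contour are the simple poles of $\Delta$ at $s = \frac{1}{2} + 2k \pm ir$, at which $\Delta'/(1-\omega\Delta)$ has residue $1/\omega$. I shift the contour from $\Re s = \frac{1}{2}+\epsilon$ to $\Re s = \sigma_0$ for a large $\sigma_0$. Each crossed residue carries a factor $1/\cos((iT\pm(\frac{1}{2}+2k\pm ir))/H) = O(e^{-(\log T)^2})$ because $|\pm r|$ is bounded, so the total contribution of the residues is $O(T^{-A})$. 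On the new contour, combining Stirling with the hypothesis $m<T^{2-\delta}$ and the $O(H)$ mass of $1/\cos$ gives
\[
\int_{\Re s=\sigma_0} \ll (m/n)^{\sigma_0}\, T^{-2\sigma_0}\, H\log T \ll T^{-\delta\sigma_0 + 1 + o(1)},
\]
which is $O(T^{-A})$ once $\sigma_0$ is chosen large enough in terms of $A$ and $\delta$.

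For \eqref{second}, the integrand lacks any decaying factor of $\Delta$, and the analysis splits according to whether $m=n$. In the diagonal case, $(m/n)^s\equiv 1$ and I shift to $\Re s=0$ (no poles crossed, since the nearest ones lie at $\frac{1}{2}\pm ir$). On the imaginary axis the integral reads $\frac{1}{2\pi}\int_\R g(t)/\cosh((T\pm t)/H)\,\mathrm{d}t$ with $g(t) := \Delta'(it)/\Delta(it)$ real and even; the exponential concentration of $1/\cosh$ about $t\approx\mp T$ confines the integration to $|t|>20$ up to $O(T^{-A})$, and comparison with the definition of $I_T$ identifies the value as $-I_T/(4\pi) + O(T^{-A})$. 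In the off-diagonal case $m\neq n$, the substitution $s\mapsto -s$ together with evenness of $\Delta'/\Delta$ exchanges the integrals with sign $+$ and $(m,n)$ with the integral with sign $-$ and $(n,m)$, up to residues at $\pm\frac{1}{2}\pm ir$ (again $O(T^{-A})$), so I may assume $m=\min(m,n)<T^{1-\delta}$. Shifting to $\Re s=\sigma_0$ with $\sigma_0 \asymp A(m+1)\log T$, the elementary bound $\log(n/m)\geq 1/(m+1)$ yields $(m/n)^{\sigma_0} \leq e^{-\sigma_0/(m+1)} = O(T^{-A})$, while the crossed residues remain $O(T^{-A})$ thanks to the $e^{-(\log T)^2}$ factor from $1/\cos$.

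The main technical hurdle is the regime in which $n/m$ is very close to $1$, because the decay of $(m/n)^{\sigma_0}$ is then slow and one must take $\sigma_0$ as large as $T^{1-\delta}\log T$. Two things have to be verified in that range: first, that Stirling's asymptotics for $\Delta$ remain uniformly valid on $\Re s=\sigma_0$, which holds because the $1/\cos$ weight concentrates the integration on $|t|\sim T \gg \sigma_0$; and second, that the number of residues swept up by the shift (which grows linearly in $\sigma_0$, hence polynomially in $T$) still combines with the pointwise $e^{-(\log T)^2}$ bound on each residue to leave a total that is smaller than any fixed negative power of $T$. Both points go through essentially because the super-polynomial factor $e^{-(\log T)^2}$ absorbs any polynomial loss.
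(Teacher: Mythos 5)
Your treatment of \eqref{third} follows the paper's (shift right to a fixed abscissa $\sigma_0\asymp (A+1)/\delta$, compare $(m/n)^{\sigma_0}\leq T^{(2-\delta)\sigma_0}$ against Stirling's bound $|\Delta(\sigma_0+it)|\ll T^{-2\sigma_0}$), but the step ``$|\Delta(s)|<1$ strictly in the right half-plane, so $1-\omega\Delta$ is bounded away from zero'' is false. Indeed $\Delta(0)=1$, and $\Delta$ has poles at $s=\tfrac12+2k\pm ir$ (from $\Gamma(\tfrac{1/2-s\pm ir}{2})$) where $|\Delta|\to\infty$; there genuinely can be zeros of $1-\omega\Delta$ in the region you sweep. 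The correct justification, which is what the paper appeals to, is that \emph{every} pole of $\Delta$ and \emph{every} zero of $1-\omega\Delta$ with $\Re s>0$ must have bounded imaginary part, because Stirling gives $|\Delta(\sigma+it)|\asymp|t|^{-2\sigma}<1$ once $|t|$ exceeds a constant depending on $r$ and $\sigma$. All these poles therefore lie far from $\Im s=\mp T$ and their residues are suppressed by the $1/\cos$ weight; you should replace your claim with this argument.

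For the diagonal case of \eqref{second} you agree with the paper (shift to $\Re s=0$, read off $I_T$). For the off-diagonal case your route genuinely differs. The paper stays on $\Re s=0$, writes the integrand as $(m/n)^{it}g(t)/\cosh((T\mp t)/H)$ with $g$ given explicitly by \eqref{logder}, and integrates by parts repeatedly: each integration gains a factor $\ll\max(1/T,1/H)/|\log(m/n)|\ll \min(m,n)(\log T)^2/T\ll T^{-\delta+o(1)}$, so $O(A/\delta)$ applications suffice. You instead symmetrize via $s\mapsto -s$ to reduce to $m<n$ and shift far to the right to $\sigma_0\asymp A(m+1)\log T$, possibly as large as $T^{1-\delta}\log T$, so as to exploit the decay of $(m/n)^{\sigma_0}$. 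This is valid in outline, but it carries extra bookkeeping: one must (a) confirm Stirling remains uniform on that line (fine, because the $1/\cos$ weight concentrates at $|\Im s|\sim T\gg\sigma_0$), (b) confirm no zeros of $\cos((iT\pm s)/H)$ are crossed (the nearest lies at $\Re s=H\pi/2\asymp T/(\log T)^2>\sigma_0$, so none are — but you don't record this), and (c) sum the $\asymp\sigma_0$ crossed residues, each $\ll e^{-(\log T)^2}$. You flag (a) and (c) explicitly; (b) should be added. The paper's integration-by-parts at $\Re s=0$ sidesteps all of (a)--(c) and is the cleaner path, though your alternative is sound once the two corrections above are made.
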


\begin{proof}
We need some estimates about $\Delta(s)$. By Stirling's formula, we have that for $|t|\gg1$,
\begin{align*}
|\Delta(\sigma+it)|&\ll \left|\frac{\pi^{2(\sigma+it)}\Gamma\left(\frac{\frac{1}{2}-\sigma+i(r-t)}{2}\right) \Gamma\left(\frac{\frac{1}{2}-\sigma-i(r+t)}{2}\right)}{\Gamma\left(\frac{\frac{1}{2}+\sigma+i(r+t)}{2}\right) \Gamma\left(\frac{\frac{1}{2}+\sigma+i(t-r)}{2}\right)}\right| \ll_\sigma t^{-2\sigma}.
\end{align*}
Writing $\Delta(s)=\pi^{2s}\Gamma_1\Gamma_2/(\Gamma_3\Gamma_4)$, we compute
\begin{align}\label{logder}
\nonumber \frac{\Delta'(it)}{\Delta(it)}&= 2\log\pi + \frac{\Gamma_1'}{\Gamma_1} (it)+ \frac{\Gamma_2'}{\Gamma_2}(it) - \frac{\Gamma_3'}{\Gamma_3} (it)- \frac{\Gamma_4'}{\Gamma_4}(it) \\
&= -\frac{1}{2} \log\left(\frac{\frac{1}{16}+\frac{1}{4}((r+t)^2+(t-r)^2)+(t^2-r^2)^2}{16\pi^4}\right) + O(|t|^{-1+\epsilon}),
\end{align}
and thus also 
$$\frac{\mathrm{d}^j}{\mathrm{d}t^j} \frac{\Delta'}{\Delta}(it) = O_j(|t|^{1-j}),$$
for $j\geq 2$. In order to prove (\ref{third}), we push the line of integration rightwards to $\Re(s)=(A+1)/\delta + \delta'$, with $0<\delta'<1$ chosen so that the contour has a distance bounded from any pole of the integrand. 
In pushing the line rightwards as indicated above, the only poles we pass are counter-weighted by the hyperbolic cosine factor (since these poles can only occur for $t$ bounded away from the real axis) and they therefore contribute a negligible amount. We are thus left with estimating 
\begin{align*}
&\int_{\Re(s)=(A+1)/\delta + \delta'} \left(\frac{m}{n}\right)^s \frac{\Delta'}{\Delta}(s) \frac{\Delta(s)}{1-\omega\Delta(s)} \frac{\mathrm{d}s}{\cos\left(\frac{iT\pm s}{H}\right)} \\
&\hspace{5 mm}\ll \int_{\frac{T}{2}}^{2T} T^{(2-\delta) \left(\frac{A+1}{\delta}+\delta'\right)} \log(|t|) T^{-2\left(\frac{A+1}{\delta}+\delta'\right)} \mathrm{d}t +O(|T|^{-A})\\
&\hspace{5 mm} \ll T^{-A}.
\end{align*}
In order to prove (\ref{second}), we note that $\Delta(s)$ has no poles nor zeroes on $\Re(s)=0$, and as before the only poles we might encounter are negligible, and we may thus shift our line of integration to $\Re(s)=0$. By (\ref{logder}), the integral becomes 
\begin{align*}
-\frac{1}{4\pi} \int_\R \left(\frac{m}{n}\right)^{it}\frac{\log\left(\frac{\frac{1}{16}+\frac{1}{4}((r+t)^2+(t-r)^2)+(t^2-r^2)^2}{16\pi^4}\right)+O(|t|^{-1+\epsilon})}{\cosh\left(\frac{T\pm t}{H}\right)} \mathrm{d}t.
\end{align*}
If $m\not= n$, then by repeated integration by parts, the integral is negligible. The lemma follows. 
\end{proof}
We note that $I_T$ satisfies
$$I_T= \int_{t\geq 20} \frac{\log\left(\frac{\frac{1}{16}+\frac{1}{4}((r+t)^2+(t-r)^2)+(t^2-r^2)^2}{16\pi^4}\right)+O(|t|^{-1})}{\cosh\left(\frac{t-T}{H}\right)} \mathrm{d}t.$$

We recall that by the analog of Mertens' Theorem for Rankin-Selberg $L$-functions, there exists a constant, $C$, such that 
$$\sum_{p\leq x} \frac{\l_f(p)^2}{p} = \log \log x + C + o(1),$$
and will use it without mention in the proof of the following lemmas.
\begin{lemma}\label{lemma2}
For any $|\alpha| \leq \frac{1}{(\log L)^3}$, we have 
\begin{align*}
&\log \prod_p (1+r(p)^2\l_f(p)^2p^\alpha) - \log \prod_p (1+ r(p)^2\l_f(p)^2) \\
& \hspace{5 mm} \leq  \alpha \left(\log N- (1+o(1)) \frac{\log N \log\log \log N}{\log \log N}\right).
\end{align*}
\end{lemma}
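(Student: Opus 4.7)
The plan is to convert the difference of logs of products into a sum over primes, estimate the per-prime contribution via Taylor expansion in $\alpha$, and reduce to an asymptotic that follows from the Rankin-Selberg Mertens estimate. Writing $u(p) := r(p)^{2}\lambda_{f}(p)^{2}$, which is supported on $L^{2}\leq p\leq \exp((\log L)^{2})$, I would start from
\[
\log \prod_{p}(1+u(p)p^{\alpha}) - \log \prod_{p}(1+u(p)) = \sum_{p}\log\!\left(1+\frac{u(p)(p^{\alpha}-1)}{1+u(p)}\right) \leq \sum_{p}\frac{u(p)(p^{\alpha}-1)}{1+u(p)},
\]
using $\log(1+x)\leq x$. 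Throughout the support of $r$, one has $|\alpha \log p|\leq (\log L)^{2}/(\log L)^{3}=1/\log L=o(1)$, so the Taylor expansion $p^{\alpha}-1 = \alpha\log p + O(\alpha^{2}\log^{2}p)$ holds uniformly.

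Substituting, the right-hand side splits as $\alpha \sum_{p} u(p)\log p/(1+u(p))$ plus a Taylor remainder bounded by $O(\alpha^{2}\sum_{p}u(p)\log^{2}p)$. By partial summation from the Rankin-Selberg Mertens estimate $\sum_{p\leq x}\lambda_{f}(p)^{2}/p = \log\log x + C + o(1)$, one finds $\sum_{p}u(p)\log^{2}p \ll L^{2}\log\log L$, so that (with $|\alpha|\leq 1/(\log L)^{3}$) the Taylor error is absorbed into the claimed correction. For the main term, writing $u/(1+u) = u - u^{2}/(1+u)$ reduces the task to comparing $\alpha\sum_{p}u(p)\log p$ with $\alpha\sum_{p}u(p)\log p/(1+u(p))$; the discrepancy is controlled by $|\alpha|\sum_{p}u(p)^{2}\log p$, which converges rapidly thanks to the $p^{-2}$ decay and Kim-Sarnak, and is again negligible against the desired $\log\log\log N/\log\log N$ correction.

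It remains to compute the main term $\alpha\sum_{p}u(p)\log p = \alpha L^{2}\sum_{p}\lambda_{f}(p)^{2}/(p\log p)$. Another partial summation against the Rankin-Selberg Mertens asymptotic, with endpoints $L^{2}$ and $\exp((\log L)^{2})$, gives
\[
\sum_{L^{2}\leq p\leq \exp((\log L)^{2})}\frac{\lambda_{f}(p)^{2}}{p\log p} = \frac{1+o(1)}{2\log L}.
\]
Hence $\alpha\sum_{p}u(p)\log p \sim \alpha L^{2}/(2\log L)$. Substituting $L^{2}=\log N\log\log N$ and $2\log L = \log\log N + \log\log\log N + o(1)$ yields
\[
\frac{L^{2}}{2\log L} = \log N\cdot\frac{1}{1+\log\log\log N/\log\log N + o(1)} = \log N - (1+o(1))\frac{\log N\log\log\log N}{\log\log N},
\]
which is the stated bound after multiplying by $\alpha$. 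The main technical obstacle is organizing the error analysis: one must verify that the Taylor remainder and the $u/(1+u)$ correction are each of strictly smaller order than $\log\log\log N/\log\log N$, and this is exactly what dictates the hypothesis $|\alpha|\leq 1/(\log L)^{3}$.
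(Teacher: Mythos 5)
Your proposal is correct and follows essentially the same route as the paper: reduce to a sum over primes, linearize $p^{\alpha}-1$, extract the main term $\alpha L^{2}\sum_{p}\lambda_{f}(p)^{2}/(p\log p)$ by partial summation against the Rankin--Selberg Mertens estimate, and finish with the same arithmetic $L^{2}/(2\log L)=\log N-(1+o(1))\log N\log\log\log N/\log\log N$. The one organizational difference is how the factor $1/(1+u(p))$ is disposed of: the paper simply uses $u/(1+u)\leq u$ together with $\log(1+y)=y(1+O(y))$ for $y=O(1/\log L)$, which for $\alpha>0$ (the case actually used via Rankin's trick) gives the inequality at once, whereas you split $u/(1+u)=u-u^{2}/(1+u)$ and bound the extra piece $|\alpha|\sum_{p}u(p)^{2}\log p$ separately. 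Your version is a bit more robust for $\alpha<0$, but note that the bound $\sum_{p}u(p)^{2}\log p\ll L^{2}\log\log L/(\log L)^{3}$ really needs a Mertens-type estimate for $\sum_{p\leq x}\lambda_{f}(p)^{4}/p$ (coming from Rankin--Selberg theory for $\mathrm{sym}^{2}f$), not Kim--Sarnak on its own: the Kim--Sarnak exponent $7/64$ does not make $\sum_{p}\lambda_{f}(p)^{4}/(p\log^{3}p)$ converge over an arbitrary range, and it is the restriction $p\geq L^{2}$ together with the fourth-moment Mertens bound that supplies the needed $(\log L)^{-3}$ saving.
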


\begin{proof}
We write 

\begin{align*}
&\log \prod_p (1 + r(p)^2 \l_f(p)^2 p^\alpha) - \log \prod_p (1+ r(p)^2 \l_f(p)^2) \\
&\hspace{5 mm}= \sum_{L^2 \leq p \leq \exp(\log^2 L)} \log\left(1 + \frac{r(p)^2 \l_f(p)^2 (p^\alpha -1)}{1+ r(p)^2 \l_f(p)^2}\right)\\
&\hspace{5 mm}= \sum_{L^2 \leq p \leq \exp(\log^2 L)} \frac{r(p)^2 \l_f(p)^2 (p^\alpha -1) }{ 1 +r(p)^2\l_f(p)^2} \left(1+O\left( \frac{r(p)^2 \l_f(p)^2 (p^\alpha-1)}{1 + r(p)^2 \l_f(p)^2}\right)\right).   
\end{align*}
Since, 
$$(p^\alpha-1)\frac{r(p)^2 \l_f(p)^2}{1+r(p)^2 \l_f(p)^2} \leq p^\alpha -1 \ll \alpha \log p \ll \frac{1}{\log L},$$ 
we may bound the difference of logarithms by

\begin{align*}
& \sum_{L^2 \leq p \leq \exp(\log^2 L)} r(p)^2 \l_f(p)^2 (p^\alpha -1 ) \left( 1+ O \left( \frac{1}{\log L}\right) \right) \\
&\hspace{5 mm}= \alpha L^2 \sum_{L^2 \leq p \leq \exp(\log^2 L)} \frac{\l_f(p)^2}{p} \frac{1}{\log p} \left(1 + O\left(\frac{1}{\log L}\right)\right)\\
&\hspace{5 mm}= \alpha L^2 \left( \frac{\log(\log^2 L) + C + o(1)}{\log^2 L} - \frac{\log\log L^2 + C + o(1)}{\log L^2} \right.\\
&\left. \hspace{2 cm}+ \int_{L^2}^{\exp(\log^2 L)} \frac{\log\log x + C + o(1)}{(\log x)^2 x} \mathrm{d}x \right)  \left(1+O\left(\frac{1}{\log L}\right)\right)\\
&\hspace{5 mm}= \alpha L^2 \left( \frac{1}{2\log L}  + o \left( \frac{1}{\log L}\right)\right)  \left(1+O\left(\frac{1}{\log L}\right)\right)\\
&\hspace{5 mm} = \alpha \left( \log N -  (1+o(1)) \frac{\log N \log\log\log N}{\log\log N}\right).
\end{align*}
\end{proof}

As a corollary, we deduce the following lemma.
\begin{lemma}\label{r2g2}
For any integer $l\geq 1$ and for any $Z> N\exp\left(-\frac{\log N}{(\log\log N)^2}\right),$
$$\sum_{\substack{n< Z \\ (n,l)=1}} r(n)^2 \l_f(n)^2 = \left(1+O\left(\exp\left(-\frac{L^2}{(\log L)^5}\right)\right)\right) \prod_{p \nmid l} (1+r(p)^2 \l_f(p)^2).$$
\end{lemma}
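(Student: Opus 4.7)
The target product equals $\sum_{(n,l)=1} r(n)^2 \lambda_f(n)^2$ by multiplicativity (recall $r$ is supported on squarefrees, so the local factor at each prime is $1+r(p)^2\lambda_f(p)^2$). Hence the lemma is a statement about the tail $\sum_{n\geq Z,\,(n,l)=1} r(n)^2 \lambda_f(n)^2$, and the natural tool is Rankin's trick: for any $\alpha>0$,
\begin{equation*}
\sum_{\substack{n \geq Z \\ (n,l)=1}} r(n)^2 \lambda_f(n)^2 \;\leq\; Z^{-\alpha} \sum_{\substack{n \\ (n,l)=1}} r(n)^2 \lambda_f(n)^2 n^\alpha \;=\; Z^{-\alpha} \prod_{p\nmid l}\bigl(1+r(p)^2\lambda_f(p)^2 p^\alpha\bigr).
\end{equation*}

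The plan is then to estimate the ratio of this Euler product to $\prod_{p\nmid l}(1+r(p)^2\lambda_f(p)^2)$ using Lemma \ref{lemma2}. First I would observe that the lemma extends from the full product over $p$ to the product over $p\nmid l$: the missing local factors contribute $\prod_{p\mid l}(1+r(p)^2\lambda_f(p)^2)/(1+r(p)^2\lambda_f(p)^2 p^\alpha)\leq 1$ for $\alpha\geq 0$, so the upper bound in Lemma \ref{lemma2} still applies. Combining with the hypothesis $\log Z \geq \log N - \log N/(\log\log N)^2$, the ratio (tail) / (target product) is bounded above by
\begin{equation*}
\exp\!\left(-\alpha \log Z + \alpha\log N - \alpha(1+o(1))\frac{\log N \log\log\log N}{\log\log N}\right) \leq \exp\!\left(-\alpha(1+o(1))\frac{\log N \log\log\log N}{\log\log N}\right),
\end{equation*}
valid for any $0<\alpha\leq (\log L)^{-3}$.

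Finally I would choose $\alpha = (\log L)^{-3}$, which is permissible by Lemma \ref{lemma2}. Since $L=\sqrt{\log N \log\log N}$ gives $\log L \sim \tfrac{1}{2}\log\log N$, the exponent above becomes $-(8+o(1))\log N \log\log\log N/(\log\log N)^4$. Comparing with $L^2/(\log L)^5 \sim 32\log N/(\log\log N)^4$, the extra $\log\log\log N$ factor ensures the bound $\exp(-L^2/(\log L)^5)$ comfortably for large $N$, completing the proof.

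\textbf{Main obstacle.} There is no serious technical difficulty; the argument is a standard Rankin's trick feeding off Lemma \ref{lemma2}. The only point requiring care is calibrating $\alpha$: one needs $\alpha$ small enough to invoke Lemma \ref{lemma2} but large enough that $Z^{-\alpha}$ beats the growth of the shifted product by the desired margin. The choice $\alpha=(\log L)^{-3}$ is forced by the range in Lemma \ref{lemma2} and, fortunately, is exactly sharp enough to yield the claimed exponential saving.
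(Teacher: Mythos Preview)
Your proposal is correct and follows exactly the paper's approach: Rankin's trick to bound the tail by $Z^{-\alpha}\prod_{p\nmid l}(1+r(p)^2\lambda_f(p)^2 p^\alpha)$, then an appeal to Lemma~\ref{lemma2}. You have supplied the details the paper leaves implicit (the reduction from $p$ to $p\nmid l$, the choice $\alpha=(\log L)^{-3}$, and the final numerical comparison), but the strategy is identical.
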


\begin{proof}
We use Rankin's trick to write
\begin{align*}
\sum_{\substack{n<Z \\ (n,l)=1}} r(n)^2\l_f(n)^2 &= \sum_{\substack{n=1 \\ (n,l)=1}}^\infty r(n)^2\l_f(n)^2 - \sum_{\substack{n \geq Z \\ (n,l)=1}} r(n)^2\l_f(n)^2\\
& = \prod_{p \nmid l} (1+ r(p)^2\l_f(p)^2) + O(Z^{-\alpha} \prod_{p\nmid l} (1+p^\alpha r(p)^2 \l_f(p)^2)).
\end{align*}
The result then follows immediately from Lemma \ref{lemma2}.
\end{proof}
We now prove analogously the following two Lemmas.
\begin{lemma}\label{lemma4}
For any $|\alpha| \leq \frac{1}{(\log L)^3}$, and any multiplicative function, $g$, such that for some $m>0$, $0\leq g(p) \leq m$ for all $p$, we have
\begin{align*}
\log\left(\prod_p \frac{1+r(p)\l_f(p)^2g(p)p^{\alpha-1/2}}{1+ r(p)\l_f(p)^2g(p)p^{-1/2}}\right) \ll_m \alpha L \log\log L .
\end{align*}
\end{lemma}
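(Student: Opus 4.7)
The plan is to mimic the proof of Lemma \ref{lemma2}, combining the two Euler products into a single logarithm of the form $\log(1+X)$ with $X$ small, then expanding and using the explicit shape of $r(p)$ on its support.

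First I would write, for each prime $p$,
$$\log\left(\frac{1+r(p)\l_f(p)^2g(p)p^{\alpha-1/2}}{1+r(p)\l_f(p)^2g(p)p^{-1/2}}\right) = \log\left(1 + \frac{r(p)\l_f(p)^2 g(p) p^{-1/2}(p^\alpha-1)}{1+r(p)\l_f(p)^2g(p)p^{-1/2}}\right).$$
Since $r(p)$ is supported on $L^2\leq p\leq \exp((\log L)^2)$ and there $r(p)p^{-1/2}=L/(p\log p)$, the fraction is of size at most $O_m(p^\alpha-1)$. The bound $|\alpha|\leq (\log L)^{-3}$ together with $\log p\leq (\log L)^2$ on the support of $r$ yields $|p^\alpha-1|\ll |\alpha|\log p\ll 1/\log L$, so the argument of the outer $\log$ is uniformly $o(1)$.

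Next I would use $\log(1+X)=X+O(X^2)$ and absorb the quadratic error into a $(1+O(1/\log L))$ factor, reducing the task to bounding
$$\sum_{L^2\leq p\leq \exp((\log L)^2)} r(p)\l_f(p)^2 g(p) p^{-1/2}(p^\alpha-1).$$
Using $p^\alpha-1\ll \alpha\log p$ and the identity $r(p)p^{-1/2}\log p = L/p$ valid on the support, this sum simplifies to
$$\ll \alpha L \sum_{L^2\leq p\leq \exp((\log L)^2)} \frac{\l_f(p)^2 g(p)}{p}\ll_m \alpha L \sum_{L^2\leq p\leq \exp((\log L)^2)} \frac{\l_f(p)^2}{p},$$
where I used $0\leq g(p)\leq m$.

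Finally, I would invoke the Rankin--Selberg analogue of Mertens' theorem recalled just before Lemma \ref{lemma2}, which gives
$$\sum_{L^2\leq p\leq \exp((\log L)^2)}\frac{\l_f(p)^2}{p} = 2\log\log L - \log(2\log L)+o(1)\sim \log\log L,$$
yielding the claimed bound $\ll_m \alpha L \log\log L$. I do not expect a serious obstacle here; the only point requiring care is checking that the Taylor-expansion remainders are genuinely of lower order on the relevant range, which is ensured by the smallness of $|\alpha|\log p$ on the support of $r$.
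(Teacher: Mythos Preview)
Your proposal is correct and follows essentially the same approach as the paper's own proof: rewrite each local factor as $\log(1+X)$ with $X\ll 1/\log L$, Taylor expand, use $r(p)p^{-1/2}\log p = L/p$ on the support of $r$, bound $g(p)\leq m$, and conclude via the Rankin--Selberg Mertens estimate. The intermediate expressions and final computation match the paper almost line for line.
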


\begin{proof}
We may write
\begin{small}
\begin{align*}
 &\sum_{L^2\leq p \leq \exp(\log^2 L)} \log\left(1+ \frac{r(p)\l_f(p)^2g(p)p^{-1/2}(p^\alpha-1)}{1+r(p)g(p)\l_f(p)^2 p^{-1/2}}\right)\\
&\hspace{5 mm}= \sum_{L^2\leq p\leq \exp(\log^2 L)} \frac{r(p)\l_f(p)^2 g(p)p^{-1/2}(p^\alpha-1)}{1+r(p)g(p)\l_f(p)^2p^{-1/2}} \left(1+O\left( \frac{r(p)\l_f(p)^2g(p)p^{1/2}(p^\alpha-1)}{1+r(p)\l_f(p)^2 g(p)p^{-1/2}}\right)\right).
\end{align*}
\end{small}
Since 
$$(p^\alpha-1)\frac{r(p)\l_f(p)^2g(p)p^{-1/2}}{1+r(p)g(p)\l_f(p)^2p^{-1/2}} \leq p^\alpha-1\ll \alpha\log p\ll \frac{1}{\log L},$$
we bound
\begin{align*}
& \log\left(\prod_p \frac{1+r(p)\l_f(p)^2g(p) p^{\alpha-1/2}}{1+r(p)\l_f(p)^2g(p)p^{-1/2}}\right)\\
&\hspace{5 mm} \leq \sum_{L^2\leq p \leq \exp(\log^2 L)} r(p) \l_f(p)^2 g(p) p^{-1/2} (p^\alpha-1) \left(1+ O\left(\frac{1}{\log L}\right)\right)\\
&\hspace{5 mm} \ll_m \sum_{L^2 \leq p \leq \exp(\log^2L)} \alpha \frac{L}{p}\l_f(p)^2 \left(1+O\left(\frac{1}{\log L}\right)\right)\\
& \hspace{5 mm}= \alpha L  (\log(\log^2 L)  - \log\log L^2 +o(1)) \left(1+O\left(\frac{1}{\log L}\right)\right)\\
&\hspace{5 mm} \ll \alpha L \log\log L.
\end{align*}
\end{proof}

As a corollary we deduce the following Lemma.

\begin{lemma}\label{rr2g}
For $Z> \exp(L(\log L)^5)$, and $g$ multiplicative such that for some m, $0\leq g(p) \leq m$ for all $p$, we have
$$\sum_{n\geq Z} \frac{r(n)}{\sqrt{n}} \l_f(n)^2 g(n) \leq \exp\left(-(1+o_m(1)) \frac{\log Z}{(\log L)^3}\right),$$
and
$$\sum_{n<Z} \frac{r(n)}{\sqrt{n}} \l_f(n)^2g(n) = \left(1+O_m\left(\exp(-cL(\log L)^2)\right)\right) \prod_p\left(1+\frac{r(p)}{\sqrt{p}}\l_f(p)^2g(p)\right).$$
\end{lemma}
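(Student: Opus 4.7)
The plan is to follow the same Rankin's trick argument as in Lemma \ref{r2g2}, but with Lemma \ref{lemma4} playing the role previously played by Lemma \ref{lemma2}. The key starting observation is that because $r$ is supported on squarefree integers, the full Dirichlet series factors as an Euler product: for any real $\a$,
$$\sum_{n\geq 1} \frac{r(n)\l_f(n)^2 g(n)}{\sqrt n}\, n^\a = \prod_p\left(1 + \frac{r(p)\l_f(p)^2 g(p) p^\a}{\sqrt p}\right).$$

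For the tail estimate I would apply Rankin's trick,
$$\sum_{n\geq Z} \frac{r(n)\l_f(n)^2 g(n)}{\sqrt n} \leq Z^{-\a} \prod_p\left(1 + \frac{r(p)\l_f(p)^2 g(p) p^\a}{\sqrt p}\right),$$
and use Lemma \ref{lemma4} to compare the right-hand Euler product with its value at $\a = 0$. Choosing $\a = 1/(\log L)^3$ (the boundary value allowed by Lemma \ref{lemma4}), the product equals, up to a factor $\exp(O_m(L\log\log L/(\log L)^3))$, the $\a = 0$ product, which is itself of size $\exp(O_m(1/\log L))$ by the Mertens-type estimate recalled before Lemma \ref{lemma2}. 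The hypothesis $\log Z > L(\log L)^5$ then ensures that $Z^{-\a} = \exp(-\log Z/(\log L)^3)$ dominates both error factors, yielding the claimed bound $\exp(-(1+o_m(1))\log Z/(\log L)^3)$.

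For the main-term statement I would write
$$\sum_{n<Z} \frac{r(n)\l_f(n)^2 g(n)}{\sqrt n} = \prod_p\left(1 + \frac{r(p)\l_f(p)^2 g(p)}{\sqrt p}\right) - \sum_{n\geq Z}\frac{r(n)\l_f(n)^2 g(n)}{\sqrt n},$$
the convergence of the full series being already implicit in the above. Since every local factor of the Euler product is at least $1$, the product is bounded below by $1$, so the absolute tail bound $\exp(-cL(\log L)^2)$ from the first part (which is what the first estimate gives at the critical scale $\log Z \asymp L(\log L)^5$) translates directly into the multiplicative error $1 + O_m(\exp(-cL(\log L)^2))$.

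The one delicate point is the calibration of $\a$: Lemma \ref{lemma4} only controls the perturbed Euler product in the narrow range $|\a| \leq 1/(\log L)^3$, and the choice $\a = 1/(\log L)^3$ is precisely the value at which the savings $Z^{-\a}$ and the error furnished by Lemma \ref{lemma4} balance at the threshold $Z = \exp(L(\log L)^5)$. This is the only place where the quantitative hypothesis on $Z$ enters, and it explains why the statement is sharp up to this scale.
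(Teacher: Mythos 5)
Your approach is essentially the paper's: Rankin's trick with the shift $\alpha = 1/(\log L)^3$, Lemma \ref{lemma4} to compare the shifted Euler product to the unshifted one, and the observation that each local factor is $\geq 1$ so the absolute tail bound converts into a multiplicative $1+O_m(\cdot)$ error. One numerical slip worth flagging: the $\alpha=0$ product $\prod_p\bigl(1+r(p)\l_f(p)^2g(p)/\sqrt{p}\bigr)$ has logarithm $\asymp_m L\sum_p \l_f(p)^2/(p\log p) \asymp_m L/\log L$, not $1/\log L$ as you assert; since $r(p)/\sqrt p = L/(p\log p)$ on its support, the Mertens-type estimate gives a factor of $L$ that you dropped. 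This is harmless for the conclusion, because $L/\log L = o\bigl(\log Z/(\log L)^3\bigr)$ already when $\log Z > L(\log L)^3$, and you are working under the stronger hypothesis $\log Z > L(\log L)^5$, so $Z^{-\alpha} = \exp(-\log Z/(\log L)^3)$ still dominates both the Lemma \ref{lemma4} error $\exp(O_m(L\log\log L/(\log L)^3))$ and the unshifted product $\exp(O_m(L/\log L))$.
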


\begin{proof}
We use Rankin's trick to write 
\begin{align*}
\sum_{n<Z} \frac{r(n)}{\sqrt{n}} \l_f(n)^2g(n) &= \sum_{n=1}^\infty \frac{r(n)}{\sqrt{n}} \l_f(n)^2 g(n) - \sum_{n\geq Z} \frac{r(n)}{\sqrt{n}} \l_f(n)^2 g(n)\\
&= \prod_p \left(1+ \frac{r(p)}{\sqrt{p}} \l_f(p)^2 g(p)\right)\\
& + O\left(Z^{-\alpha} \prod_p (1+r(p)\l_f(p)^2g(p)p^{\alpha-1/2})\right). 
\end{align*}
The result now follows from Lemma \ref{lemma4}.
\end{proof}

Throughout the paper, we will also require a result of Tenenbaum \cite[Theorem 5.2, p. 281]{Tenenbaumbook}, inspired by previous work of Delange \cite{Delange54, Delange71}, that we give in the following lemma. We first need to set up some notation. Let $z\in \C$, and fix $c_0>0, 0<\delta\leq 1, M>0$, positive constants. Writing $s=\sigma +i\tau$, we say that a Dirichlet series $F(s)$ has the property $\mathcal{P}(z;c_0,\delta,M)$ if the Dirichlet series 
$$G(s;z):= F(s)\zeta(s)^{-z}$$
may be continued as a holomorphic function for $\sigma \geq 1- c_0/(1+ \log (2+|\tau|))$, and, in this domain, satisfies the bound 
$$|G(s;z)|\leq M (1+|\tau|)^{1-\delta}.$$
If $F(s)= \sum a_n/n^s$ has the property $\mathcal{P}(z;c_0,\delta,M)$, and if there exists a sequence of non-negative real numbers $\{b_n\}_{n=1}^\infty$ such that $|a_n| \leq b_n$, $(n=1,2,\cdots)$, and the series 
$$\sum_{n\geq 1} \frac{b_n}{n^s}$$
satisfies $\mathcal{P}(w;c_0,\delta,M)$ for some complex number $w$, we shall say that $F(s)$ has type $\mathcal{T}(z,w;c_0,\delta,M)$.  

\begin{lemma}\label{tenen}
Let $F(s):=\sum a_n/n^s$ be a Dirichlet series of type $\mathcal{T}(z,w;c_0,\delta,M)$. For $x\geq 3, A>0, |z|\leq A, |w|\leq A$, there exist $d> 0$ such that
$$\sum_{n\leq x} a_n = x(\log x)^{z-1} \left\{\frac{G(1;z)}{\Gamma(z)} + O( M(e^{-d \sqrt{\log x}} + \log x^{-1}))\right\}.$$
The constant $d$ and the implicit constant in the Landau symbol depend at most on $c_0, \delta,$ and $A$.
\end{lemma}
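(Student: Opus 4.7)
This is a Selberg--Delange type theorem, so the plan is to apply a truncated Perron formula to $F(s) = G(s;z)\zeta(s)^z$ and deform the contour around the branch point at $s = 1$, where the main term arises from the Hankel integral representation of $1/\Gamma(z)$.

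First I would apply a truncated Perron formula on the line $\Re(s) = 1 + 1/\log x$ with height $T_0$ to be chosen later (a power of $x$), writing
\[
\sum_{n \leq x} a_n = \frac{1}{2\pi i}\int_{1+1/\log x - iT_0}^{1+1/\log x + iT_0} F(s) \frac{x^s}{s}\, ds + \text{error}.
\]
The truncation error is estimated using the majorant $\sum b_n/n^s$, which by the type $\mathcal{T}(z,w;c_0,\delta,M)$ assumption satisfies an estimate on its own Dirichlet coefficients (or at least controlled growth on the line of convergence); this is precisely why the hypothesis is phrased in terms of a majorant with its own property $\mathcal{P}$.

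Next I would deform the contour to the broken path hugging the boundary of the zero-free region of $\zeta$: specifically to $\sigma = 1 - c_0/(1+\log(2+|\tau|))$, cut along the real axis from $1 - c_0/\log(2+T_0)$ to just left of $s=1$, with a small Hankel loop around $s=1$. Since $\zeta(s)$ has no zeros in this region and $G(s;z)$ is holomorphic there with the polynomial bound $|G(s;z)| \leq M(1+|\tau|)^{1-\delta}$, together with the standard estimate $|\zeta(s)^z| \ll \log(2+|\tau|)^{|z|}$ on the zero-free contour, the contributions from the vertical/slanted portions of the contour are $O(M x \exp(-c\sqrt{\log x}))$ after optimizing the choice of $T_0$ in terms of $x$ (taking $T_0$ of the form $\exp(c'\sqrt{\log x})$ to balance against the exponential decay in $x^\sigma$ on the curve).

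Finally, the Hankel loop around $s=1$ contributes the main term. Using $\zeta(s)^z = (s-1)^{-z}(1 + O(|s-1|))^z$ near $s=1$ and $G(s;z) = G(1;z) + O(|s-1|)$, and substituting $s = 1 + u/\log x$, the loop integral becomes
\[
\frac{x(\log x)^{z-1}}{2\pi i} \int_{\mathcal{H}} e^u u^{-z}\, du \cdot \big(G(1;z) + O(1/\log x)\big),
\]
and the Hankel integral equals $1/\Gamma(z)$. Combining the main term with the error from the zero-free region and the Perron truncation produces exactly the stated asymptotic, with the error $O(M(e^{-d\sqrt{\log x}} + 1/\log x))$. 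The main obstacle is the careful bookkeeping on the Hankel contour to handle the branch cut of $\zeta^z$ uniformly in $|z| \leq A$, and tracking that all implicit constants depend only on $c_0, \delta, A$ (this uniformity in $z$ is what makes the theorem useful for applications like the later sums involving $r(n)^2\lambda_f(n)^2$).
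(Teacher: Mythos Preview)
The paper does not prove this lemma at all: it is quoted verbatim as Theorem~5.2 (p.~281) of Tenenbaum's book, with credit to earlier work of Delange, and no argument is supplied. So there is no ``paper's own proof'' to compare your proposal against.

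That said, your outline is a correct sketch of the Selberg--Delange method that underlies Tenenbaum's proof: truncated Perron on the line $\sigma = 1 + 1/\log x$, deformation into the standard zero-free region with a Hankel loop around $s=1$, the identity $\frac{1}{2\pi i}\int_{\mathcal{H}} e^u u^{-z}\,du = 1/\Gamma(z)$ for the main term, and optimisation of the truncation height $T_0 \asymp \exp(c'\sqrt{\log x})$ to balance errors. The role of the majorant hypothesis $\mathcal{T}$ in controlling the Perron truncation error is also correctly identified. If anything, the full argument in Tenenbaum is somewhat more elaborate in the bookkeeping (an expansion of $G(s;z)\{(s-1)\zeta(s)\}^z$ near $s=1$ to several terms, careful uniformity in $|z|,|w|\leq A$), but the skeleton you describe is exactly the standard one and would, with those details filled in, reproduce the stated result.
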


\section{Computing the normalizing weight}
In this section we compute the normalizing weight, $NW$, given by (\ref{normweight}). We will require the following estimates on the coefficients $a_n$.

\begin{lemma}\label{an2}
We have
\begin{equation}\label{l7}
\sum_{n\leq T^{1-2\xi}} a_n^2 \asymp (\log T)^{1/4} \prod_p (1+r(p)^2\l_f(p)^2) \prod_p\left(1+\frac{r(p)\l_f(p)^2}{\sqrt{p}} \right).
\end{equation}
\end{lemma}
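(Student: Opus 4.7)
The plan is to exploit the (near-)multiplicativity of the $a_n$ and reduce to an application of Tenenbaum's lemma (Lemma \ref{tenen}). Ignoring truncations momentarily, set
\[\tilde a_n := \sum_{ab=n}\frac{r(a)\l_f(a)d_{1/2}(b)\l_f(b)}{\sqrt{b}}.\]
This is multiplicative, so each $n$ factors uniquely as $n=mk$ with $m$ supported on $\mathrm{supp}(r)\subset[L^2,\exp((\log L)^2)]$ and $k$ coprime to this range, giving $\tilde a_n^2 = \tilde a_m^2\tilde a_k^2$. Since $r$ vanishes outside its support, $\tilde a_k = d_{1/2}(k)\l_f(k)/\sqrt{k}$ for such $k$.

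For the coprime piece, I would apply Lemma \ref{tenen} to the Dirichlet series $F(s) = \sum (d_{1/2}(k)\l_f(k))^2/k^s$, whose local factor at $p$ expands as $1 + \l_f(p)^2/(4p^s) + O(p^{-2s})$. Using $\l_f(p)^2 = \l_{\mathrm{sym}^2 f}(p)+1$, $F$ factors as $\zeta(s)^{1/4}L(\mathrm{sym}^2 f,s)^{1/4}H(s)$, where $H$ is holomorphic and bounded in the classical zero-free region of $L(\mathrm{sym}^2 f,s)$. Tenenbaum's lemma with $z=1/4$ then yields $\sum_{k\leq Y}(d_{1/2}(k)\l_f(k))^2 \asymp Y(\log Y)^{-3/4}$; restricting to $(k,\mathrm{supp}(r))=1$ only multiplies $F$ by a bounded Euler correction, and partial summation gives $\sum_{k\leq Y,(k,\mathrm{supp}(r))=1}\tilde a_k^2 \asymp (\log Y)^{1/4}$, which for $Y\asymp T^{1-2\xi}/m$ is $\asymp(\log T)^{1/4}$.

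For the $\mathrm{supp}(r)$ piece, I would evaluate the local Euler factor $\tilde E_p(0) := \sum_{j\geq 0}\tilde a_{p^j}^2$ directly. Writing $Y_j = d_{1/2}(p^j)\l_f(p^j)/p^{j/2}$ and recalling $r(p^j)=0$ for $j\geq 2$, one has $\tilde a_{p^j} = Y_j + r(p)\l_f(p)Y_{j-1}$ (with $Y_{-1}:=0$), so
\[\tilde E_p(0) = (1+r(p)^2\l_f(p)^2)\sum_{j\geq 0}Y_j^2 + 2r(p)\l_f(p)\sum_{j\geq 0}Y_j Y_{j+1}.\]
Since $p\geq L^2$, $\sum Y_j^2 = 1+O(1/p)$ and $\sum Y_j Y_{j+1} = \l_f(p)/(2\sqrt{p})+O(p^{-3/2})$, giving $\tilde E_p(0) = (1+r(p)^2\l_f(p)^2)(1+r(p)\l_f(p)^2/\sqrt{p})(1+O(1/p))$. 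Taking the product over $p\in\mathrm{supp}(r)$ recovers the two products in the statement, the $O(1/p)$ tail contributing an absolutely convergent and hence $O(1)$ correction since $\sum_{p\geq L^2}1/p \ll 1/\log L$.

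Combining these pieces yields the claimed estimate modulo the truncations $a\leq N$, $b\leq T^\xi$, $n\leq T^{1-2\xi}$. The contributions from factorizations with $a>N$ or $b>T^\xi$ are controlled by Rankin's trick, exactly as in the proofs of Lemmas \ref{r2g2} and \ref{rr2g}, and are negligible on the scale of the main term. The main obstacle I anticipate is ensuring uniformity of Tenenbaum's constant as $\mathrm{supp}(r)$ grows with $T$; this follows because the residual factor $H$ differs from its full analog only by a convergent Euler product over $\mathrm{supp}(r)$, which is absorbed into the implied $\asymp$ constant.
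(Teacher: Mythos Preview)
Your decomposition $n=mk$ by prime support is a legitimate alternative to the paper's, but there is a genuine gap in the bookkeeping. The assertion ``$\sum_{p\geq L^2}1/p \ll 1/\log L$'' is simply false: the harmonic sum over primes diverges, and restricting to $\mathrm{supp}(r)$ gives, by Mertens, $\sum_{L^2\leq p\leq \exp((\log L)^2)} 1/p \sim \log\log L \to \infty$. Consequently neither of your ``bounded correction'' claims holds. On the $k$-side, removing the local Euler factors at $p\in\mathrm{supp}(r)$ from $F(s)$ changes the constant $G(1;1/4)$ in Lemma~\ref{tenen} by $\prod_{p\in\mathrm{supp}(r)}\bigl(\sum_j Y_j^2\bigr)^{-1}\asymp \exp\bigl(-\tfrac14\sum_{p\in\mathrm{supp}(r)}\l_f(p)^2/p\bigr)\asymp(\log L)^{-1/4}$, not $O(1)$. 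Symmetrically, on the $m$-side your own formula
\[
\tilde E_p(0)=(1+r(p)^2\l_f(p)^2)\sum_{j\geq 0} Y_j^2+2r(p)\l_f(p)\sum_{j\geq 0} Y_jY_{j+1}
\]
contains the factor $\sum_j Y_j^2=1+\l_f(p)^2/(4p)+O(p^{-1-\delta})$, which cannot be absorbed into a convergent $(1+O(1/p))$: its product over $\mathrm{supp}(r)$ is again $\asymp(\log L)^{1/4}$. These two unbounded factors cancel exactly when you multiply the $k$- and $m$-pieces, so your method does yield the right answer once you track them; but as written each intermediate $\asymp$ is off by a power of $\log\log T$, and your final paragraph's proposed resolution (``a convergent Euler product over $\mathrm{supp}(r)$'') is precisely the false step.

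The paper sidesteps this cancellation by a different decomposition. It expands $a_n^2$ as a double sum over $(l_1,l_2)$, extracts $g=(l_1,l_2)$, and rewrites the diagonal condition $l_1n_1=l_2n_2$ as an $n$-sum $\sum_n d_{1/2}(l_1n)d_{1/2}(l_2n)\l_f(l_1n)\l_f(l_2n)/n$ over \emph{all} $n$, not restricted to be coprime to $\mathrm{supp}(r)$. Lemma~\ref{tenen} is applied to this full sum, with the $l_i$-dependence isolated in an explicit Euler correction $G(s;l_1l_2)$ bounded pointwise by $d_{1/2}(l_1l_2)|\l_f(l_1l_2)|$ times a genuinely convergent factor (of the shape $1+O(p^{-\delta})$ rather than $1+O(p^{-1})$). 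The outer $g$- and $l_i$-sums are then handled directly by Lemmas~\ref{r2g2} and~\ref{rr2g}. No $T$-dependent constants enter Tenenbaum's lemma this way, and the same template recurs in the moment computations of the later sections.
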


\begin{proof}
We have
$$a_n= \sum_{\substack{l\leq T^{1-3\xi}\\ m\leq T^\xi \\ lm=n}} r(l)\l_f(l)\frac{d_{1/2}(m)\l_f(m)}{m^{1/2}},$$
so that 
\begin{small}
\begin{align*}
\sum_{n \leq T^{1-2\xi}} a_n^2 &= \sum_{l_1,l_2\leq T^{1-3\xi}} r(l_1)r(l_2) \l_f(l_1)\l_f(l_2) \sum_{\substack{n_1,n_2\leq T^\xi\\ l_1n_1=l_2n_2}} \frac{d_{1/2}(n_1)d_{1/2}(n_2) \l_f(n_1)\l_f(n_2)}{(n_1n_2)^{1/2}}\\
&= \sum_{g\leq T^{1-3\xi}} r(g)^2 \l_f(g)^2 \sum_{\substack{l_1,l_2 \leq T^{1-3\xi}/g \\ (l_1,l_2)=(l_1l_2,g)=1}} r(l_1l_2) \l_f(l_1l_2)\\
&\hspace{1 cm}\times  \sum_{\substack{n_1,n_2 \leq T^\xi \\ l_1n_1 = l_2n_2}} \frac{d_{1/2}(n_1)d_{1/2}(n_2) \l_f(n_1)\l_f(n_2)}{(n_1n_2)^{1/2}} .
\end{align*}
\end{small}
We now let $n_2 : = n_2/l_1$ and $n_1:=n_1/l_2$, so that we may rewrite this as 
\begin{align*}
 & \sum_{g\leq T^{1-3\xi}} r(g)^2\l_f(g)^2 \sum_{\substack{l_1,l_2 \leq T^{1-3\xi}/g\\ (l_1,l_2)=(l_1l_2,g)=1}} \frac{r(l_1l_2)}{(l_1l_2)^{1/2}} \l_f(l_1l_2)\\
 & \sum_{n\leq T^\xi/\max(l_1,l_2)} \frac{d_{1/2}(l_1n)d_{1/2}(l_2n)\l_f(l_1n)\l_f(l_2n)}{n}.
\end{align*}
\subsection{The $n$-sum} The idea is to treat the innermost sum by relating it to the fourth root of the Rankin-Selberg $L$-function,
\begin{align*}
L(f\times f,s) &:= \prod_p (1-p^{-s})^{-1} (1-\alpha_p^2p^{-s})^{-1}(1-\beta_p^{2}p^{-s})^{-1}(1- \alpha_p\beta_pp^{-s})^{-1}\\
&= \zeta(s) L(\hbox{sym}^2 f,s),
\end{align*}
where 
$$L(\hbox{sym}^2f,s) := \prod_p (1-p^{-s})^{-2} (1-\alpha_p^2p^{-s})^{-1}(1-\beta_p^{2}p^{-s})^{-1},$$
denotes the symmetric square $L$-function as studied by Gelbart and Jacquet \cite{GelbartJacquet}. 
Following \cite[Chapter II.5]{Tenenbaumbook}, we define the generalized binomial coefficient by
\begin{align*}
&\left(\begin{array}{l} \omega \\ \nu \end{array} \right) := \frac{1}{\nu!} \prod_{0\leq j < \nu}(\omega - j) & (\omega \in \C, \nu\in \N),
\end{align*}
so that
\begin{align*}
L^{1/4}(f\times f,s) =& \prod_p (1-p^{-s})^{-1/2} (1-\alpha_p^2p^{-s})^{-1/4}(1-\beta_p^{2}p^{-s})^{-1/4}\\
=& \prod_p \left(\sum_{k=0}^\infty \left(\begin{array}{c} k-\frac{1}{2} \\ k \end{array} \right) p^{-ks}\right) \left(\sum_{k=0}^\infty \left( \begin{array}{c} k-\frac{3}{4} \\ k \end{array}\right) \alpha_p^{2k}p^{-ks}\right) \\
& \times \left( \sum_{k=0}^\infty \left(\begin{array}{c} k-\frac{3}{4} \\k \end{array}\right) \beta_p^{2k}p^{-ks}\right)  \\
=& \prod_p \left(\sum_{k=0}^\infty a(p^k)p^{-ks}\right), 
\end{align*}
where $a$ is a multiplicative function such that 
$$a(p) = \frac{\l_f(p)^2}{4} = d_{1/2}^2(p) \l_f^2(p).$$
Given that $L(\hbox{sym}^2f,s)$ is a cuspidal automorphic $L$-function (see \cite{GelbartJacquet}), writing $s= \sigma + i \tau$,   there exists a constant $c>0$, depending on $f$, such that  $\zeta(s)L(\hbox{sym}^2 f,s)$ is non-zero in the region $\sigma > 1- c/\log(2+|\tau|)$ (see \cite{MichelParkcity}). We note that $L(\hbox{sym}^2 f,s)$ is entire in that region, so that 
$$\sum_{n=1}^\infty \frac{d_{1/2}^2(n) \l_f^2(n)}{n^s} = \zeta^{1/4}(s) L^{1/4}( \hbox{sym}^2 f,s) F(s),$$
where $F(s)$ is a non-zero, bounded and holomorphic function in the region $\sigma > 1- c/\log(2+|\tau|)$. It follows that
\begin{align*}
&\sum_{n=1}^\infty \frac{d_{1/2}(l_1n) d_{1/2}(l_2n) \l_f(l_1n)\l_f(l_2n)}{n^{s}} \\
&= \prod_{p | l_1l_2} \left( \sum_{k=0}^\infty \frac{d_{1/2}(p^{k+1}) d_{1/2}(p^k) \l_f(p^{k+1}) \l_f(p^k)}{p^{ks}}\right) \times \prod_{p\nmid l_1l_2} \left(\sum_{k=0}^\infty \frac{d_{1/2}^2(p^k)\l_f^2(p^k)}{p^{ks}}\right)\\
&= G(s;l_1l_2) \prod_p \left(\sum_{k=0}^\infty \frac{d_{1/2}^2(p^k)\l_f^2(p^k)}{p^{ks}}\right) \\
&= G(s;l_1l_2) F(s) L^{1/4}(\hbox{sym}^2 f,s) \zeta^{1/4}(s),
\end{align*}
where
$$G(s;l) = \prod_{p | l} \frac{\sum_{k=0}^\infty \frac{d_{1/2}(p^{k+1})d_{1/2}(p^k) \l_f(p^{k+1}) \l_f(p^k)}{p^{ks}}}{\sum_{k=0}^\infty \frac{d_{1/2}^2(p^k) \l_f^2(p^k)}{p^{ks}}}.$$
Observe that the denominator is non-zero because the coefficients are positive. We let 
$$G(s;1/4,l_1,l_2):= L^{1/4}(\hbox{sym}^2 f,s) F(s) G(s,l_1,l_2),$$
and wish to bound $|G(s;1/4,l_1,l_2)|$ in the aforementioned domain. Noting that $k$ and $k+1$ have distinct parity, we estimate
\begin{align*}
|G(s,l_1l_2)|  &= \left|\prod_{p | l_1l_2} \frac{\sum_{k=0}^\infty \frac{d_{1/2}(p^{k+1})d_{1/2}(p^k)\l_f(p^{k+1})\l_f(p^k)}{p^{ks}}}{\sum_{k=0}^\infty \frac{d_{1/2}^2(p^k)\l_f^2(p^{k})}{p^{ks}}}\right|\\
&\leq d_{1/2}(l_1l_2)|\l_f(l_1l_2) \prod_{p|l_1l_2} M_G(l_1l_2)|,
\end{align*}   
where $M_G$ is a multiplicative function supported on squarefree integers satisfying at primes
\begin{equation}\label{mp}
M_G(p)^{\pm 1} \leq (1 + C_1 p^{-\delta_1}),
\end{equation}
for some $\delta_1>0$ and an absolute constant $C_1$ (one may use bounds towards the Ramanujan-Petersson conjecture as given in \cite{KimSarnak}). Since $L^{1/4}(s,\hbox{sym}^2 f) \ll \tau^{\delta}$ for any arbitrarily small $\delta>0$, and letting $M >0$ be such that $F(s)\leq M$ in that region, we conclude that
$$|G(s;1/4,l_1,l_2)| \leq M | \l_f(l_1l_2)|d_{1/2}(l_1l_2) M_G(l_1l_2) (1+|\tau|^\delta).$$
By Lemma \ref{tenen}, we conclude that for $\max(l_1,l_2) \leq T^{\xi-\epsilon}$, 
\begin{align*}
&\max(l_1,l_2) \sum_{n \leq T^\xi/\max (l_1,l_2)} d_{1/2}(l_1n) d_{1/2}(l_2n) \l_f(l_1n) \l_f(l_2n) \\
&= \frac{T^\xi}{\log (\frac{T^\xi}{\max(l_1,l_2)})^{3/4}} G(1;1/4,l_1,l_2) \frac{1}{\Gamma(\frac{1}{4})}  +O \left(\frac{T^\xi |\l_f(l_1l_2)|d_{1/2}(l_1l_2)M_G(l_1l_2)}{(\log T^\xi/\max(l_1,l_2))^{7/4}}\right).   
\end{align*}
It then follows by summation by parts, that whenever $\max(l_1,l_2) \leq T^{\xi-\epsilon}$, we can estimate
\begin{align}\label{inner}
&\sum_{n\leq T^\xi/\max(l_1,l_2)} \frac{d_{1/2}(l_1n)d_{1/2}(l_2n) \l_f(l_1n) \l_f(l_2n)}{n}\\
\nonumber&=4 \frac{1}{\Gamma\left(\frac{1}{4}\right)} G(1;\frac{1}{4},l_1,l_2) \log\left(\frac{T^\xi}{\max(l_1,l_2)}\right)^{1/4}+ O\left(\frac{ |\l_f(l_1l_2)|d_{1/2}(l_1l_2)M_G(l_1l_2)}{ (\log \frac{T^\xi}{\max(l_1,l_2)})^{3/4}}\right).
\end{align}

\subsection{The $l_i$ and $g$ sums} We let $Z= \exp((\log N)^{2/3})$ and consider first the contribution from the main term above when $\max(l_1,l_2)< Z$. Namely, we estimate
\begin{align*}
\sum_{g \leq T^{1-3\xi}} r(g)^2 \l_f(g)^2 \sum_{\substack{l_1,l_2 \leq T^{1-3\xi}/g \\ \max(l_1,l_2)< Z \\ (l_1,l_2)=(l_1l_2,g)=1}} \frac{r(l_1l_2)}{(l_1l_2)^{1/2}} \l_f(l_1l_2)^2 d_{1/2}(l_1l_2) H(l_1l_2) \left(\log T\right)^{1/4}, 
\end{align*}
where $H(l)$ is a non-negative multiplicative function supported on squarefree integers, satisfying (\ref{mp}) on primes, possibly with a different constant. By Lemma \ref{r2g2} we thus estimate
\begin{align*}
&(\log T)^{1/4} \sum_{\substack{l_1,l_2 < Z \\ (l_1,l_2)=1}} \frac{r(l_1l_2)}{\sqrt{l_1l_2}} \l_f(l_1l_2)^2d_{1/2}(l_1l_2) H(l_1l_2) \sum_{\substack{g\leq \frac{T^{1-2\xi}}{\max(l_1,l_2)} \\ (g,l_1l_2)=1}} r(g)^2\l_f(g)^2\\
& \sim (\log T)^{1/4} \sum_{\substack{ l_1,l_2<Z\\ (l_1,l_2)=1}} \frac{r(l_1l_2)}{\sqrt{l_1l_2}} \l_f(l_1l_2)^2 d_{1/2}(l_1l_2) H(l_1l_2) \prod_{p\nmid l_1l_2} (1+r^2(p)\l_f(p)^2)\\
&= (\log T)^{1/4} \prod_p (1+r(p)^2\l_f(p)^2)\sum_{\substack{l_1,l_2 < Z\\ (l_1,l_2)=1}} \frac{r(l_1l_2)}{\sqrt{l_1l_2}} \l_f(l_1l_2)^2 d_{1/2}(l_1l_2) \tilde{H}(l_1l_2) ,
\end{align*}
where $\tilde{H}(l)$ is a non-negative, multiplicative function, absolutely bounded on primes and satisfying
$$\tilde{H}(p)= \frac{H(p)}{(1+r(p)^2\l_f(p)^2)}.$$
We make the change of variables $l=l_1l_2$ to reduce our estimation to that of
\begin{align*}
\sum_{l<Z} \frac{r(l)}{\sqrt{l}}\l_f(l)^2 \tilde{H}(l)&\sim \prod_p\left(1+\frac{r(p)}{\sqrt{p}} \l_f(p)^2\tilde{H}(p)\right)\sim \prod_p\left(1+\frac{r(p)}{\sqrt{p}} \l_f(p)^2\right) ,
\end{align*}
by Lemma \ref{rr2g}. The contribution from the tail $\max(l_1,l_2) \geq Z$ is bounded by 
\begin{align*}
&\log T \sum_{g\leq T^{1-3\xi}} r(g)^2 \l_f(g)^2 \sum_{l_1\leq T^{1-3\xi}/g} \frac{r(l_1) \l_f(l_1)^2}{\sqrt{l_1}} \sum_{Z<l_2 \leq T^{1-3\xi}/g} \frac{r(l_2)\l_f(l_2)^2}{\sqrt{l_2}}\\
&\ll \log T \exp(-(\log N)^{2/3-\epsilon})\prod_p(1+r(p)^2\l_f^2(p)) \prod_p\left(1+\frac{r(p)\l_f(p)^2}{\sqrt{p}}\right),
\end{align*}
by Lemma \ref{rr2g}, which is negligible. We are only left with estimating the contribution coming from the error term in (\ref{inner}), with $\max(l_1,l_2) < Z$. We thus care to bound 
\begin{align*}
&  (\log T)^{-3/4} \sum_{\substack{l_1,l_2<Z \\ (l_1,l_2)=1}} \frac{r(l_1l_2)}{\sqrt{l_1l_2}} \l_f(l_1l_2)^2d_{1/2}(l_1l_2)M_G(l_1l_2)\sum_{\substack{g\leq \frac{T^{1-3\xi}}{\max(l_1,l_2)} \\ (g,l_1l_2)=1}} r(g)^2\l_f(g)^2\\
&\sim (\log T)^{-3/4} \sum_{\substack{l_1,l_2<Z \\ (l_1,l_2)=1}} \frac{r(l_1l_2)}{\sqrt{l_1l_2}} \l_f(l_1l_2)^2d_{1/2}(l_1l_2) M_G(l_1l_2)\prod_{p \nmid l_1l_2} (1+r(p)^2\l_f(p)^2)\\
&\sim (\log T)^{-3/4} \prod_p (1 + r(p)^2\l_f(p)^2) \prod_p \left(1+\frac{r(p)M_G(p)\l_f^2(p)}{\sqrt{p} (1+r(p)^2\l_f(p)^2)}\right)
\end{align*}
which is negligible.
Putting all of the estimates together, we obtain (\ref{l7}).

\end{proof}
We conclude this section by computing the normalizing weight. 
\begin{prop}\label{prop2}
We may estimate the normalizing weight,
\begin{align*}
NW &\asymp (\log T)^{1/4} \prod_p(1+r(p)^2\l_f(p)^2) \prod_p \left(1+\frac{r(p)}{\sqrt{p}} \l_f(p)^2\right) I_T.
\end{align*}
\end{prop}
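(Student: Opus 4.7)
The plan is to convert the discrete weighted sum defining $NW$ into a contour integral via the argument principle applied to $\Delta(s) - e^{2i\theta}$, whose imaginary-axis zeros are precisely $\{it : t \in T_\theta\}$, and then reduce the integral to the diagonal sum $\sum_n a_n^2$ by appealing to Lemma \ref{prelimlemma}. Taking a counterclockwise rectangular contour $\Gamma$ with vertical sides $\Re(s) = \pm(\tfrac{1}{2}+\epsilon)$ and horizontal sides $|\Im(s)|=T'\to\infty$, one writes
\begin{equation*}
NW = \frac{1}{2\pi i}\oint_\Gamma R(s)R(-s)\, \frac{\Delta'(s)}{\Delta(s) - e^{2i\theta}}\, \frac{ds}{\cos\!\left(\frac{s-iT}{H}\right)},
\end{equation*}
with the horizontal sides contributing negligibly by exponential decay of $1/\cos((s-iT)/H)$.

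On the right vertical line, the Stirling bound $|\Delta(s)|\ll |t|^{-(1+2\epsilon)}$ justifies rewriting
\begin{equation*}
\frac{\Delta'(s)}{\Delta(s) - e^{2i\theta}} = -e^{-2i\theta}\,\frac{\Delta'(s)}{1 - e^{-2i\theta}\Delta(s)},
\end{equation*}
and expanding $R(s)R(-s) = \sum_{m,n\leq T^{1-2\xi}}a_m a_n (n/m)^s$. Invoking \eqref{third} with $\omega=e^{-2i\theta}$ for each pair $(m,n)$ and using that only $T^{O(1)}$ pairs occur then shows that the entire right-line integral is $O(T^{-A})$.

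On the left vertical line, the substitution $s\mapsto -u$, the functional equation $\Delta(s)\Delta(-s)=1$ (which yields $\Delta'(-u)/(\Delta(-u)-e^{2i\theta})=(\Delta'(u)/\Delta(u))/(1-e^{2i\theta}\Delta(u))$), and the evenness of $\cos$ transform the integral into
\begin{equation*}
\frac{1}{2\pi i}\int_{\Re(u)=\frac{1}{2}+\epsilon} R(u)R(-u) \, \frac{\Delta'(u)/\Delta(u)}{1-e^{2i\theta}\Delta(u)}\, \frac{du}{\cos\!\left(\frac{u+iT}{H}\right)}.
\end{equation*}
The partial-fraction identity
\begin{equation*}
\frac{\Delta'(u)/\Delta(u)}{1-e^{2i\theta}\Delta(u)} = \frac{\Delta'(u)}{\Delta(u)} + \frac{e^{2i\theta}\Delta'(u)}{1-e^{2i\theta}\Delta(u)}
\end{equation*}
splits this into two pieces: the second is $O(T^{-A})$ by \eqref{third} with $\omega=e^{2i\theta}$, while the first, upon expanding $R(u)R(-u)$ and applying \eqref{second}, collapses to the diagonal $m=n$ and contributes $-\frac{I_T}{4\pi}\sum_n a_n^2$.

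Reassembling the two vertical sides with the correct orientation yields $NW = \frac{I_T}{4\pi}\sum_n a_n^2 + O(T^{-A})$, and Lemma \ref{an2} then supplies the Euler-product asymptotic for $\sum_n a_n^2$, producing the claimed bound. The technical crux is choreographing the functional equation so that both contours reduce to the precise form covered by Lemma \ref{prelimlemma}, and then using the partial-fraction decomposition to isolate the $\Delta'/\Delta$ term responsible for the factor $I_T$; once these manipulations are justified (via $|\Delta(s)|<1$ on the right and $|\Delta(s)|>1$ on the left, where the geometric expansions converge uniformly in $t$), the rest is bookkeeping.
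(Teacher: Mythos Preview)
Your proposal is correct and follows essentially the same approach as the paper: both express $NW$ as a contour integral of $R(s)R(-s)\,\Delta'(s)/(\Delta(s)-e^{2i\theta})$ against $1/\cos((iT-s)/H)$ over two vertical lines, show the $\Re(s)=\tfrac12+\epsilon$ piece is negligible via \eqref{third}, substitute $s\mapsto -s$ on the left line, split off the $\Delta'/\Delta$ term (the paper via geometric-series expansion, you via the equivalent partial-fraction identity), apply \eqref{second} to collapse to the diagonal, and finish with Lemma~\ref{an2}. The only differences are cosmetic.
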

\begin{proof}
We denote by $\Gamma_\epsilon$ the contour defined by the line $\Re(s)=1/2+\epsilon$ clockwards and $\Re(s)= -1/2-\epsilon$ anticlockwards, so that up to negligible error, we have 
\begin{align*}
NW & \sim \frac{1}{2\pi i} \int_{\Gamma_\epsilon} R(s)R(-s) \frac{\Delta'(s)}{\Delta(s)-e^{2i\theta}} \frac{\mathrm{d}s}{\cos\left(\frac{iT-s}{H}\right)}.
\end{align*}
The integral on $\Re(s)= \frac{1}{2}+\epsilon$ is negligible by (\ref{third}). On $\Re(s) = -\frac{1}{2} - \epsilon$, we substitute $s\mapsto -s$ and thus need to estimate 
\begin{align*}
&\int_{\Re(s)= \frac{1}{2} + \epsilon} R(s)R(-s) \left(-\frac{\Delta'(s)}{\Delta(s)} \frac{1}{1-e^{2i\theta} \Delta(s)} \right) \frac{\mathrm{d}s}{\cos\left(\frac{iT+s}{H}\right)} \\
& = \sum_{k=0}^\infty \int_{\Re(s)=\frac{1}{2}+\epsilon} R(s)R(-s) \left(-\frac{\Delta'(s)}{\Delta(s)} \Delta^k(s) e^{2ik\theta}\right) \frac{\mathrm{d}s}{\cos\left(\frac{iT+s}{H}\right)} \\
&= - \int_{\Re(s)=\frac{1}{2}+\epsilon} R(s)R(-s) \frac{\Delta'(s)}{\Delta(s)} \frac{\mathrm{d}s}{\cos\left(\frac{iT+s}{H}\right)}\\
&\hspace{4 mm} - \int_{\Re(s)=\frac{1}{2}+\epsilon} R(s)R(-s) \Delta'(s)\frac{e^{2i\theta}}{1-e^{2i\theta}\Delta(s)} \frac{\mathrm{d}s}{\cos\left(\frac{iT+s}{H}\right)}\\
&= -\sum_{m,n\leq T^{1-2\xi}} a_ma_n \int_{\Re(s)=\frac{1}{2}+\epsilon} \left(\frac{m}{n}\right)^s \frac{\Delta'(s)}{\Delta(s)} \frac{\mathrm{d}s}{\cos\left(\frac{iT+s}{H}\right)}\\
&\hspace{4 mm}-e^{2i\theta} \sum_{m,n\leq T^{1-2\xi}} a_ma_n \int_{\Re(s)= \frac{1}{2}+\epsilon} \left(\frac{m}{n}\right)^s \frac{\Delta'(s)}{\Delta(s)} \frac{\Delta(s)}{1-e^{2i\theta}\Delta(s)} \frac{\mathrm{d}s}{\cos\left(\frac{iT+s}{H}\right)}.
\end{align*}
Using Lemma \ref{prelimlemma} and Lemma \ref{an2}, we conclude that
\begin{align*}
NW & \asymp (\log T)^{1/4} \prod_p(1+r(p)^2\l_f(p)^2) \prod_p \left(1+\frac{r(p)}{\sqrt{p}} \l_f(p)^2 \right)I_T.
\end{align*}

\end{proof}

\section{The unsigned moment}
We denote by $\bold{E}_{w_{T,\theta}}$ the expectation over $T_{\theta}$ with respect to the measure $w_{T,\theta}$ and wish to give a lower bound to 
\begin{align}\label{pm}
\nonumber&\hbox{NW} .\bold{E}_{w_{T,\theta}}\left[\left|L\left(f,\frac{1}{2}+it\right)\right|\right]  \geq \hbox{NW}. \left|\bold{E}_{w_{T,\theta}}\left[L\left(f,\frac{1}{2}-it\right)\frac{A_{1/2}\left(\frac{1}{2}+it\right)^2}{\left|A_{1/2}\left(\frac{1}{2}+it\right)\right|^2}\right]\right|\\
&\sim \frac{1}{2\pi}\left| \int_{\Gamma_\epsilon} L\left(f,\frac{1}{2}-s\right) A_{1/2}\left(\frac{1}{2}+s\right)^2 R^*(s)R^*(-s) \frac{\Delta'(s)}{\Delta(s)-e^{2i\theta}} \frac{\mathrm{d}s}{\cos\left(\frac{iT-s}{H}\right)}\right|\\
\nonumber&= \frac{1}{2\pi} \left| \int_{\Re(s) = 1/2+\epsilon} \cdots + \int_{\Re(s)= -1/2-\epsilon}\cdots \right|. 
\end{align}
\subsection{Contribution from the integral along the line $\Re(s) = \frac{1}{2}+\epsilon$ }We show that the contribution from this term is negligible. We first note that by Mellin inversion, for a smooth $\phi:\R \rightarrow [0,1]$ compactly supported in $[-1,1]$ such that $\phi \equiv 1$ in a neighborhood of $0$, we have uniformly in $\{s=\sigma+it: T/2\leq t \leq 2T, 0\leq \sigma \leq 2\}$, and for all $\epsilon >0, A>0$,
$$L(f,s)= \sum_{n\geq 1} \frac{\l_f(n)}{n^s} \phi\left(\frac{n}{T^{2+\epsilon}}\right) + O(T^{-A}).$$ 
By the definition of $\Delta(s)$, the integral becomes
\begin{small}
\begin{align*}
&\int_{\Re(s)=\frac{1}{2}+\epsilon} L\left(f,\frac{1}{2}+s\right) A_{1/2}\left(\frac{1}{2}+s\right)^2 R^*(s)R^*(-s) \frac{\Delta'}{\Delta}(s) \frac{1}{\Delta(s)-e^{2i\theta}} \frac{\mathrm{d}s}{\cos\left(\frac{iT-s}{H}\right)}\\
&= \sum_{n\geq 1} \frac{\l_f(n)}{n^{1/2}} \phi\left(\frac{n}{T^{2+\epsilon}}\right)\sum_{l_1,l_2 <T^{1-3\xi}} \sum_{m_1,m_2<T^\xi} \frac{d_{1/2}(m_1)d_{1/2}(m_2)\l_f(m_1)\l_f(m_2)}{(m_1m_2)^{1/2}} \\
&\times   r(l_1)r(l_2)\l_f(l_1)\l_f(l_2)\int_{\Re(s)=\frac{1}{2}+\epsilon} \left(\frac{l_2}{nm_1m_2l_1}\right)^s \frac{\Delta'}{\Delta}(s) \frac{1}{\Delta(s)-e^{2i\theta}} \frac{\mathrm{d}s}{\cos\left(\frac{iT-s}{H}\right)}  +O(T^{-A}).
\end{align*}
\end{small}
We write
\begin{equation*}
(\Delta(s)-e^{2i\theta})^{-1}= -e^{-2i\theta}- e^{-4i\theta} \frac{\Delta(s)}{1-e^{-2i\theta}\Delta(s)} ,
\end{equation*}
and the contribution of the second term to the $s$-integral is 
\begin{align*}
\int_{\Re(s)=\frac{1}{2}+\epsilon} \left(\frac{l_2}{nm_1m_2l_1}\right)^s \frac{\Delta'}{\Delta}(s) \frac{ \Delta(s)}{1-e^{-2i\theta}\Delta(s)} \frac{\mathrm{d}s}{\cos\left(\frac{iT-s}{H}\right)},
\end{align*}
which by (\ref{third}) is negligible. It remains to bound the contribution of the first term above, which is 
\begin{align*}
&\frac{1}{2\pi i}\int_{\Re(s)=\frac{1}{2}+\epsilon} \left(\frac{l_2}{nm_1m_2l_1}\right)^s \frac{\Delta'}{\Delta}(s) \frac{\mathrm{d}s}{\cos\left(\frac{iT-s}{H}\right)} = -\frac{1}{4\pi}\delta_{l_2=nm_1m_2l_1} I_T + O(T^{-A}),
\end{align*}
by (\ref{second}). We therefore just need to estimate 
\begin{small}
\begin{align*}
&\sum_{l_1,l_2<T^{1-3\xi}}\frac{\sqrt{l_1}r(l_1)r(l_2)\l_f(l_1)\l_f(l_2)}{\sqrt{l_2}}\sum_{\substack{m_1,m_2<T^\xi \\ nm_1m_2= \frac{l_2}{l_1}}} \l_f(n) d_{1/2}(m_1)d_{1/2}(m_2)\l_f(m_1)\l_f(m_2) I_T\\
&=  \sum_{l_1,l_2<T^{1-3\xi}}\frac{\l_f(l_2)^2\sqrt{l_1}r(l_1)r(l_2)}{\sqrt{l_2}}\sum_{\substack{m_1,m_2<T^\xi \\ nm_1m_2= \frac{l_2}{l_1}}}  d_{1/2}(m_1)d_{1/2}(m_2) I_T,
\end{align*}
\end{small}
After making a change of variables $l_2=l_2/l_1$, we thus estimate
\begin{align*}
&\sum_{l_1 < T^{1-3\xi}} r(l_1)^2 \l_f(l_1)^2 \sum_{\substack{l_2\leq \frac{T^{1-3\xi}}{l_1} \\ (l_1,l_2)=1}} \frac{\l_f(l_2)^2r(l_2)}{\sqrt{l_2}}\sum_{\substack{m_1,m_2<T^\xi \\ nm_1m_2=l_2}} d_{1/2}(m_1)d_{1/2}(m_2) I_T\\
&\leq \sum_{l_1<T^{1-3\xi}} r(l_1)^2 \l_f(l_1)^2 \sum_{\substack{l_2\leq \frac{T^{1-3\xi}}{l_1}\\ (l_1,l_2)=1}} \frac{\l_f(l_2)^2r(l_2)}{\sqrt{l_2}} \sum_{m|l_2 < T^{2\xi}} d(m) d_{1/2}(m) I_T\\
&\leq  \sum_{l_1<T^{1-3\xi}} r(l_1)^2 \l_f(l_1)^2 \sum_{\substack{l_2\leq \frac{T^{1-3\xi}}{l_1}\\ (l_1,l_2)=1}} \frac{\l_f(l_2)^2r(l_2)}{\sqrt{l_2}} d(l_2)I_T\\
&\ll \prod_p(1+r(p)^2\l_f(p)^2) \left(\prod_p\left(1+\frac{r(p)}{\sqrt{p}} \l_f(p)^2\right)\right)^2 I_T.
\end{align*}
Dividing by the normalizing weight, using Proposition \ref{prop2}, we see that the contribution from $\Re(s)=1/2+\epsilon$ in (\ref{pm}) is bounded by
$$\ll (\log T)^{-1/4} \prod_p\left(1+\frac{r(p)}{\sqrt{p}} \l_f(p)^2\right),$$
which is smaller than (\ref{unsignedmoment}) by a factor of $\log T$.
\subsection{The main term} The integral along the line $\Re(s)=-1/2-\epsilon$  contributes to (\ref{pm}) as a main term. We make the change of variables $s\rightarrow -s$, and estimate 
\begin{small}
\begin{align*}
&\int_{\Re(s)=\frac{1}{2}+\epsilon} L\left(f,\frac{1}{2}+s\right) A_{1/2}\left(\frac{1}{2}-s\right)^2 R^*(s)R^*(-s) \frac{\Delta'(-s)}{\Delta(-s)-e^{2i\theta}} \frac{\mathrm{d}s}{\cos\left(\frac{iT+s}{H}\right)}\\
&= \int_{\Re(s)=\frac{1}{2}+\epsilon} L\left(f,\frac{1}{2}+s\right) A_{1/2}\left(\frac{1}{2}-s\right)^2 R^*(s)R^*(-s) \frac{\Delta'}{\Delta}(s) \frac{1}{1-e^{2i\theta} \Delta(s)} \frac{\mathrm{d}s}{\cos\left(\frac{iT+s}{H}\right)}\\
&= \sum_{n\geq 1} \frac{\l_f(n)}{n^{1/2}} \phi\left(\frac{n}{T^{2+\epsilon}}\right)\sum_{l_1,l_2 < T^{1-3\xi}}  \sum_{m_1,m_2 < T^\xi} \frac{d_{1/2}(m_1)d_{1/2}(m_2) \l_f(m_1)\l_f(m_2)}{(m_1m_2)^{1/2}}  \\
& \times r(l_1)r(l_2) \l_f(l_1)\l_f(l_2) \int_{\Re(s)=\frac{1}{2}+\epsilon} \left(\frac{m_1m_2l_2}{nl_1}\right)^s \frac{\Delta'}{\Delta}(s) \frac{1}{1-e^{2i\theta}\Delta(s)} \frac{\mathrm{d}s}{\cos\left(\frac{iT+s}{H}\right)}.
\end{align*}
\end{small}
We write
$$(1-e^{2i\theta}\Delta(s))^{-1}= 1+\frac{e^{2i\theta}\Delta(s)}{1-e^{2i\theta}\Delta(s)},$$ 
 and by the same observation as before, only the contribution from the first term above is non-negligible. By (\ref{second}), this term yields, up to negligible error term,
\begin{gather}
\nonumber\sum_{l_1,l_2 < T^{1-3\xi}} r(l_1)r(l_2) \l_f(l_1)\l_f(l_2) \sum_{\substack{m_1,m_2 < T^\xi \\ m_1m_2l_2=nl_1}} \frac{\l_f(n)d_{1/2}(m_1)d_{1/2}(m_2)\l_f(m_1)\l_f(m_2)}{(nm_1m_2)^{1/2}} I_T\\
\label{sg} =  \sum_{g\leq T^{1-3\xi}} S(g) I_T,
\end{gather}
where $S(g)$ is defined as
\begin{small}
\begin{align*}
&\sum_{\substack{l_1,l_2 \leq \frac{T^{1-3\xi}}{g}\\ (l_1,l_2)=(l_1l_2,g)=1}} r(l_1)r(l_2)\l_f(l_1)\l_f(l_2) \sum_{\substack{m_1,m_2<T^\xi\\ m_1m_2l_2=nl_1}} \frac{\l_f(n)d_{1/2}(m_1)d_{1/2}(m_2) \l_f(m_1)\l_f(m_2)}{\sqrt{nm_1m_2}}.
\end{align*}
\end{small}
We let $l_{11}= (l_1,m_1), l_{12}=l_1/l_{11}$ and $m_1:= m_1/l_{11}, m_2:= m_2/l_{12}$, so that
\begin{align*}
S(g) &= \sum_{\substack{l_1,l_2 \leq \frac{T^{1-3\xi}}{g}\\ (l_1,l_2)=(l_1l_2,g)=1}} \frac{r(l_1)r(l_2)\l_f(l_1)\l_f(l_2)}{\sqrt{l_1l_2}} \sum_{l_{11}l_{12}=l_1}\\
&\times \sum_{l_{11}m_1,l_{12}m_2<T^\xi} \frac{\l_f(m_1m_2l_2)d_{1/2}(l_{11}m_1)d_{1/2}(l_{12}m_2)\l_f(l_{11}m_1)\l_f(l_{12}m_2)}{m_1m_2}.
\end{align*}
We will estimate the outer sum by repeated use of Lemma \ref{tenen}. We first evaluate the $m_1$-sum and then the $m_2$-sum.
\subsubsection{The $m_1$-sum}
Writing $l$ for $l_{11}$ and $m$ for $m_1$, we study the series

\begin{align*}
\sum_{m=1}^\infty \frac{\l_f(m_2l_2m)d_{1/2}(lm)\l_f(lm)}{m^s} &= G_1(s;m_2,l_2,l) \prod_p\left(\sum_{k=0}^\infty \l_f(p^k)^2d_{1/2}(p^k)p^{-ks}\right),
\end{align*}
where
$$G_1(s;m_2,l_2,l):= \prod_{p|m_2l_2l} \frac{\sum_{k=0}^\infty \l_f(p^{\nu_p(m_2l_2)+k})d_{1/2}(p^{\nu_p(l)+k})\l_f(p^{\nu_p(l)+k})p^{-ks}}{\sum_{k=0}^\infty \l_f(p^k)^2d_{1/2}(p^k)p^{-ks}}.$$
We wish to relate our Euler product to $L^{1/2}(\hbox{sym}^2f,s)$. We have
\begin{align*}
L^{1/2}(f\times f,s) =& \prod_p(1-p^{-s})^{-1}(1-\alpha_p^2p^{-s})^{-1/2}(1-\beta_p^{2}p^{-s})^{-1/2}\\
=&\prod_p \left(\sum_{k=0}^\infty p^{-ks}\right) \left(\sum_{k=0}^\infty \left(\begin{array}{c} k-\frac{1}{2}\\ k \end{array}\right) \alpha_p^{2k}p^{-ks}\right)\\
& \times \left(\sum_{k=0}^\infty \left(\begin{array}{c} k-\frac{1}{2} \\ k \end{array}\right) \beta_p^{2k}p^{-ks}\right)\\
=& \prod_p\left(\sum_{k=0}^\infty b(p^k)p^{-ks}\right),
\end{align*}
where $b$ is a non-negative multiplicative function such that
$$b(p)= \frac{\l_f(p)^2}{2} = d_{1/2}(p) \l_f^2(p).$$
Writing $s=\sigma +i\tau$, we thus have
\begin{equation}\label{prodsquare}
\prod_p\left(\sum_{k=0}^\infty \l_f(p^k)^2d_{1/2}(p^k)p^{-ks}\right) = \zeta(s)^{1/2}L^{1/2}(\hbox{sym}^2f,s) B(s),
\end{equation}
where $B(s)$ is a bounded holomorphic function in the region $\sigma> 1- c/\log(2+|\tau|)$. We write
$$\sum_{m=1}^\infty \frac{\l_f(m_2l_2m)d_{1/2}(lm)\l_f(lm)}{m^s}= G_1\left(s;\frac{1}{2},m_2,l_2,l\right) \zeta(s)^{1/2},$$
where 
$$G_1\left(s;\frac{1}{2},m_2,l_2,l\right) := L^{1/2}(\hbox{sym}^2f,s)B(s) G_1(s;m_2,l_2,l).$$
We define $M_1(m_2,l_2,l)$ to be
$$   \prod_{p|m_2l_2l} \sup_{\sigma > 1 -\frac{c}{\log(2+|\tau|)}} \left|\frac{\sum_{k=0}^\infty |\l_f(p^{\nu_p(m_2l_2)+k})d_{1/2}(p^{\nu_p(l)+k})\l_f(p^{\nu_p(l)+k})|p^{-ks}}{\sum_{k=0}^\infty \l_f(p^k)^2d_{1/2}(p^k)p^{-ks}}\right|,$$
and deduce by Lemma \ref{tenen} the following lemma.
\begin{lemma}\label{lemmasum1}
For $l \leq T^{\xi-\epsilon}$, we have
\begin{align*}
\sum_{m<\frac{T^\xi}{l}} \frac{\l_f(m_2l_2m)d_{1/2}(lm)\l_f(lm)}{m} & = \frac{2+o(1)}{\Gamma\left(\frac{1}{2}\right)} \left(\log \frac{T^\xi}{l}\right)^{1/2} G_1\left(1;\frac{1}{2},m_2,l_2,l\right)\\
&+ O\left(\frac{M_1}{\log^{1/2} T}\right).
\end{align*}
\end{lemma}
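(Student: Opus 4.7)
The plan is to apply Tenenbaum's theorem (Lemma \ref{tenen}) with $z=1/2$ to the Dirichlet series
$$F(s) := \sum_{m=1}^{\infty} \frac{\l_f(m_2l_2m)\,d_{1/2}(lm)\,\l_f(lm)}{m^s} = G_1\!\left(s;\tfrac{1}{2},m_2,l_2,l\right)\zeta(s)^{1/2},$$
and then convert the resulting partial-sum asymptotic into the stated harmonic-weighted sum by Abel summation. The argument proceeds in complete parallel to the estimate of the $n$-sum leading to (\ref{inner}), with the exponent $1/4$ there replaced by $1/2$ since we are now extracting half a power of $\zeta$ rather than a quarter.

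The first step is to verify that $F$ is of type $\mathcal{T}(1/2,w;c_0,\delta,CM_1)$ in the sense preceding Lemma \ref{tenen}. The factorization (\ref{prodsquare}) gives $G_1(s;1/2,m_2,l_2,l) = L^{1/2}(\hbox{sym}^2 f,s)\,B(s)\,G_1(s;m_2,l_2,l)$. Using the zero-free region $\sigma > 1 - c/\log(2+|\tau|)$ of $\zeta(s)L(\hbox{sym}^2 f,s)$, the growth bound $L^{1/2}(\hbox{sym}^2 f,s) \ll (1+|\tau|)^{\delta}$ (already invoked for the $n$-sum), the boundedness of $B(s)$ in that region, and the very definition of $M_1(m_2,l_2,l)$ as the supremum over the finite collection of local factors at primes dividing $m_2l_2l$, one obtains $|G_1(s;1/2,m_2,l_2,l)| \ll M_1 (1+|\tau|)^{\delta}$ throughout the required domain, with the dependence on $(m_2,l_2,l)$ concentrated entirely in the finite Euler product. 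A non-negative termwise majorant for $|\l_f(m_2l_2m)d_{1/2}(lm)\l_f(lm)|$ of the same analytic type is furnished by Kim--Sarnak bounds toward Ramanujan--Petersson, as referenced in (\ref{mp}).

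Lemma \ref{tenen} then yields, for $l \leq T^{\xi-\epsilon}$ so that $x := T^\xi/l \geq T^{\epsilon}$,
$$\sum_{m\leq T^\xi/l}\l_f(m_2l_2m)d_{1/2}(lm)\l_f(lm) = \frac{G_1(1;1/2,m_2,l_2,l)}{\Gamma(1/2)}\cdot\frac{T^\xi/l}{(\log(T^\xi/l))^{1/2}} + O\!\left(\frac{M_1\,T^\xi/l}{(\log T)^{3/2}}\right).$$
Writing $S(u)$ for the partial sum and applying Abel summation converts this into a statement about the harmonic sum: the main term integrates via $\int du/(u\sqrt{\log u}) = 2\sqrt{\log u}$, producing $\frac{2\,G_1(1;1/2,m_2,l_2,l)}{\Gamma(1/2)}(\log(T^\xi/l))^{1/2}$, while the Tenenbaum error integrates to $O(M_1(\log T)^{-1/2})$ at the upper limit; the bounded-range and boundary contributions are of size $O(M_1)$ and thus of order $o(1)$ relative to the main term, so they are absorbed into the $(2+o(1))$ coefficient.

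The main obstacle is the first step: tracking the analytic continuation of $G_1(s;1/2,m_2,l_2,l)$ and its majorant so that the dependence on $(m_2,l_2,l)$ is captured precisely by $M_1$ rather than by a larger or uncontrolled quantity. Once this bookkeeping is in place, the application of Tenenbaum's theorem and the subsequent partial summation are routine and proceed in direct analogy with the $n$-sum estimate already carried out for (\ref{inner}).
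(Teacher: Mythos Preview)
Your proposal is correct and follows exactly the paper's approach: after establishing $F(s)=G_1(s;\tfrac12,m_2,l_2,l)\,\zeta(s)^{1/2}$ and defining $M_1$, the paper simply writes ``deduce by Lemma \ref{tenen} the following lemma,'' with the passage from the unweighted partial sum to the $1/m$-weighted sum by partial summation left implicit and entirely parallel to the derivation of (\ref{inner}). One minor quibble: your absorption of the $O(M_1)$ lower-range contribution into the $(2+o(1))$ coefficient would require $M_1=o(|G_1|\sqrt{\log T})$, which is not established; but this imprecision is shared with the paper's stated error term, and an $O(M_1)$ error is in any case all that is needed for (and all that survives into) Lemma \ref{dt}.
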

\subsubsection{The $m_2$-sum}
We now evaluate the contribution of the main term of Lemma \ref{lemmasum1} and study the associated Dirichlet series
\begin{align*}
&\sum_{m_2} \frac{d_{1/2}(l_{12} m_2) \l_f(l_{12}m_2) G_1\left(1;m_2,l_2,l_{11}\right)}{m_2^s} \\ 
&= G_2(s; l_{11},l_{12},l_2) \prod_{p} \frac{\displaystyle \sum_{k,k'=0}^\infty \frac{d_{1/2}(p^k) \l_f(p^k) \l_f(p^{k+k'}) d_{1/2}(p^{k'}) \l_f(p^{k'})}{ p^{ks+k'}}}{\sum_{k=0}^\infty \l_f(p^k)^2d_{1/2}(p^k) p^{-k}} ,
\end{align*}
where $G_2(s; l_{11},l_{12},l_2)$ is defined as
$$\prod_{p| l_2l_{11}l_{12}} G_{2,p}(s; l_{11},l_{12},l_2),$$
and $G_{2,p}$ is given by
\begin{small}
$$ \frac{\displaystyle\sum_{k,k'=0}^\infty d_{\frac{1}{2}} (p^{\nu_p(l_{12})+k}) \l_f(p^{\nu_p(l_{12})+k}) \l_f(p^{\nu_p(l_2)+k+k'}) d_{\frac{1}{2}}(p^{\nu_p(l_{11})+k'})\l_f(p^{\nu_p(l_{11})+k'})p^{-ks-k'}}{\displaystyle\sum_{k,k'=0}^\infty d_{\frac{1}{2}}(p^k) \l_f(p^k) \l_f(p^{k+k'}) d_{\frac{1}{2}}(p^{k'}) \l_f(p^{k'}) p^{-ks-k'}}.$$
We note that the prime factors of $l_1, l_2$ are, by the support of $r$, large enough so that the denominator above does not vanish.
\end{small}
\begin{claim}
Let $s=\sigma +i\tau$; there exists a function, $C(s)$, bounded and holomorphic in the region $\sigma>1-c/\log (2+|\tau|)$ such that
 $$\prod_{p} \frac{\displaystyle \sum_{k,k'=0}^\infty \frac{d_{\frac{1}{2}}(p^k) \l_f(p^k) \l_f(p^{k+k'}) d_{\frac{1}{2}}(p^{k'}) \l_f(p^{k'})}{ p^{ks+k'}}}{\displaystyle\sum_{k=0}^\infty \l_f(p^k)^2 d_{\frac{1}{2}}(p^k) p^{-k}}= \zeta^{1/2}(s) L^{1/2}(\hbox{sym}^2f,s) C(s).$$
\end{claim}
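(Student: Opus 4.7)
The plan is to factor the Euler product on the left into a leading part that matches $\zeta^{1/2}(s) L^{1/2}(\hbox{sym}^2 f, s)$ at lowest order in $p^{-s}$, and then to define $C(s)$ as the quotient Euler product; this quotient will converge absolutely in a half-plane strictly containing the zero-free region.

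First, I separate the inner ($k'$) sum from the outer ($k$) sum. Setting
$$U_k(p) := \sum_{k' \geq 0} \frac{d_{1/2}(p^{k'}) \l_f(p^{k'}) \l_f(p^{k+k'})}{p^{k'}},$$
the numerator at $p$ becomes $\sum_{k \geq 0} d_{1/2}(p^k) \l_f(p^k) U_k(p) p^{-ks}$, and $U_0(p) = D_p$ is precisely the denominator of the claim. Hence
$$\frac{N_p(s)}{D_p} = 1 + \sum_{k \geq 1} c(p^k) p^{-ks}, \qquad c(p^k) := \frac{d_{1/2}(p^k) \l_f(p^k) U_k(p)}{D_p}.$$
Using the Kim-Sarnak bound $|\l_f(p^m)| \ll (m+1) p^{m\theta}$ with $\theta = 7/64 < 1/4$ and the positivity of $D_p$, one checks that $|U_k(p)| \ll (k+1) p^{k\theta}$ uniformly in $p$, that $U_1(p) = \l_f(p) + O(p^{3\theta - 1})$, and that $D_p = 1 + O(p^{2\theta - 1})$. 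Combining these gives
$$c(p) = \tfrac{1}{2} \l_f(p)^2 + O(p^{4\theta - 1}).$$

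This matches the $p^{-s}$-coefficient $b(p) = \l_f(p)^2/2$ of the local Euler factor $L_p(s)$ of $\zeta^{1/2}(s) L^{1/2}(\hbox{sym}^2 f, s)$ (as in~(\ref{prodsquare})) up to an error of size $O(p^{4\theta - 1})$, while for $k \geq 2$ one has $|c(p^k)| \ll (k+1)^2 p^{2k\theta}$. Defining $C(s) := \prod_p C_p(s)$ with $C_p(s) := (N_p(s)/D_p) L_p(s)^{-1}$, and with branches of $L^{1/2}(\hbox{sym}^2 f, s)$ fixed as in the paper's conventions, a termwise comparison of Dirichlet expansions gives
$$C_p(s) - 1 \ll p^{-1-\sigma + \epsilon} + \sum_{k \geq 2} (k+1)^{O(1)} p^{k(2\theta - \sigma)}.$$
This is summable over $p$ for $\sigma > 1/2 + 2\theta$, so $\prod_p C_p(s)$ converges absolutely and uniformly on compacta in that half-plane, defining a holomorphic function bounded by a fixed constant. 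Choosing $c$ sufficiently small in the zero-free region of $\zeta(s) L(\hbox{sym}^2 f, s)$ ensures the region $\sigma > 1 - c/\log(2+|\tau|)$ is contained in $\{\sigma > 1/2 + 2\theta\}$, so $C(s)$ is holomorphic and bounded there.

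The main obstacle is uniform control of the auxiliary series $U_k(p)$ across all $k \geq 1$ and all primes; Kim-Sarnak (or indeed any nontrivial bound with $\theta < 1/4$) suffices. Once the leading-order identification $c(p) = b(p) + O(p^{4\theta - 1})$ is in place, the remainder of the argument is a standard Euler product manipulation, paralleling the treatment of the auxiliary factor $B(s)$ in~(\ref{prodsquare}).
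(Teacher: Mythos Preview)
Your argument is correct and follows essentially the same route as the paper: identify the coefficient of $p^{-s}$ in each local factor as $d_{1/2}(p)\lambda_f(p)^2=\tfrac12\lambda_f(p)^2=b(p)$, and then observe that the remaining Euler product converges absolutely in a half-plane containing the zero-free region. The paper's proof is simply the terse version of yours, recording only the expansion $\prod_p\bigl(1+d_{1/2}(p)\lambda_f(p)^2p^{-s}+O(p^{-s-1/2})\bigr)$ without spelling out the Kim--Sarnak-based estimates you supply for $U_k(p)$ and $c(p^k)$.
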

\begin{proof}
We have 
\begin{align*}
&\prod_{p} \frac{\sum_{k,k'=0}^\infty d_{1/2}(p^k) \l_f(p^k) \l_f(p^{k+k'}) d_{1/2}(p^{k'}) \l_f(p^{k'}) p^{-ks-k'}}{\sum_{k=0}^\infty \l_f(p^k)^2d_{1/2}(p^k) p^{-k}}\\
&= \prod_p \left(1 + d_{1/2}(p) \l_f(p)^2 p^{-s} + O(p^{-s-\frac{1}{2}}) \right),
\end{align*}
and the claim follows immediately.
\end{proof} 
By the above claim, we have
\begin{align*}
\sum_{m_2} \frac{d_{1/2}(l_{12}m_2)\l_f(l_{12}m_2)G_1(1;m_2,l_2,l_{11})}{m_2^s} = \zeta^{1/2}(s) G_2\left(s;\frac{1}{2},l_{11},l_{12},l_2\right),
\end{align*}
where
$$G_2\left(s;\frac{1}{2},l_{11},l_{12},l_2\right) = G_2(s;l_{11},l_{12},l_2) L^{1/2}(\hbox{sym}^2f,s) C(s).$$
We let 
$$M_{2}(l_{11},l_{12},l_2)= \prod_{p|l_{11}l_{12}l_2} \sup_{\sigma> 1-\frac{c}{\log(2+|\tau|)}} |M_{2,p}(s)|$$
where  $M_{2,p}(s)$ is given by
\begin{small}
 \begin{align*}
  \frac{\displaystyle \sum_{k,k'=0}^\infty \left|d_{\frac{1}{2}}(p^{\nu_p(l_{12})+k})\l_f(p^{\nu_p(l_{12})+k})\l_f(p^{\nu_p(l_2)+k+k'})d_{\frac{1}{2}}(p^{\nu_p(l_{11})+k'}) \l_f(p^{\nu_p(l_{11})+k'})\right|p^{-ks-k'}}{\displaystyle \sum_{k,k'=0}^\infty |d_{\frac{1}{2}}(p^k)\l_f(p^k)\l_f(p^{k'})\l_f(p^{k+k'}) d_{\frac{1}{2}}(p^{k'})|p^{-ks-k'}}.
\end{align*}
\end{small}
We note that by the parity of $\nu_p(l_{12})+k, \nu_p(l_2)+k+k',$ and $\nu_p(l_{11})+k'$, we have
$$M_2(l_{11},l_{12},l_2) \leq d_{1/2}(l_1) |\l_f(l_1l_2)| M_2(l_1l_2),$$
where $M_2(l)$ is a positive multiplicative function supported on squarefree integers and satisfying 
\begin{equation}\label{m2b}
M_2(p)^{\pm 1} \leq  (1 + C_2 p^{-\delta_2}),
\end{equation}
for some absolute constant $C_2$ and some $\delta_2 > 0$. By Lemma \ref{tenen}, we obtain for $l_{12}< T^{\xi-\epsilon},$
\begin{small}
\begin{align*}
\sum_{m_2<\frac{T^\xi}{l_{12}}} \frac{d_{1/2}(l_{12}m_2)\l_f(l_{12}m_2)G(1;m_2,l_2,l_{11})}{m_2} &=  \frac{2+o(1)}{\Gamma\left(\frac{1}{2}\right)} G_2\left(1;\frac{1}{2},l_{11},l_{12},l_2\right) \log^{1/2}\left(\frac{T^\xi}{l_{12}}\right)  \\
&  +O\left(\frac{d_{1/2}(l_1) |\l_f(l_1l_2)|M_2(l_1l_2)}{\log^{1/2}T}\right).
\end{align*}
\end{small}
We may control the contribution from the error term in Lemma \ref{lemmasum1} similarly. Namely, with $s=\sigma+i\tau$ and $z=\delta+i \gamma$, we let 
$$M_{3}(l_{11},l_{12},l_2) :=  \prod_{p|l_{11}l_{12}l_2} \sup_{\sigma > 1- \frac{c}{\log (2+|\tau|)}} \frac{\displaystyle\sum_{k=0}^\infty \sup_{\delta> 1-\frac{c}{\log(2+|\gamma|)}} |G_{3,p}(k;s,z,l_{11},l_{12},l_2)|}{\displaystyle\sum_{k=0}^\infty \sup_{\delta > 1- \frac{c}{\log (2+|\gamma|)}} |G^\dagger_{3,p}(k;s,z,l_{11},l_{12},l_2)|},$$
where $G_{3,p}(k;s,z,l_{11},l_{12},l_2)$ is given by
\begin{small}
$$\frac{\displaystyle \sum_{k'=0}^\infty |\l_f(p^{\nu_p(l_2)+k+k'}) d_{1/2}(p^{\nu_p(l_{11})+k'}) \l_f(p^{\nu_p(l_{11})+k'}) d_{1/2}(p^{\nu_p(l_{12})+k})\l_f(p^{\nu_p(l_{12})+k})|p^{-ks-k'z}}{\displaystyle \sum_{k'=0}^\infty  \l_f(p^{k'})^2d_{1/2}(p^{k'})p^{-k'z}},$$
\end{small}
and $G_{3,p}^\dagger(k;s,z,l_{11},l_{12},l_2)$ is given by
$$\frac{\displaystyle \sum_{k'=0}^\infty |\l_f(p^{k+k'}) d_{1/2}(p^{k'}) \l_f(p^{k'}) d_{1/2}(p^k) \l_f(p^k)|p^{-ks-k'z}}{\displaystyle \sum_{k'=0}^\infty \l_f(p^{k'})^2d_{1/2}(p^{k'})p^{-k'z}}.$$
We note that we also have 
$$M_3(l_{11},l_{12},l_2) \leq d_{1/2}(l_1) |\l_f(l_1l_2)| M_3(l_1l_2),$$
where $M_3\geq M_2$ is a function satisfying (\ref{m2b}) possibly with a different constant. Using these to bound the contribution from the error term, we conclude the following lemma.
\begin{lemma}\label{dt}
For $l_{11}, l_{12}< T^{\xi-\epsilon},$ we have
\begin{align*}
&\sum_{l_{11}m_1,l_{12}m_2<T^\xi} \frac{\l_f(m_1m_2l_2)d_{1/2}(l_{11}m_1)d_{1/2}(l_{12}m_2)\l_f(l_{11}m_1)\l_f(l_{12}m_2)}{m_1m_2}  \\
&= \left(\frac{4+o(1)}{\Gamma\left(\frac{1}{2}\right)}\right)^2 \left(\log \frac{T^\xi}{l_{11}}\right)^{1/2} \left(\log \frac{T^\xi}{l_{12}}\right)^{1/2} G_2\left(1;\frac{1}{2},l_{11},l_{12},l_2\right)\\
& \hspace{6 cm}+O(d_{1/2}(l_1) |\l_f(l_1l_2)| M_3(l_1l_2)).
\end{align*}
\end{lemma}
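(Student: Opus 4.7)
The plan is to evaluate the double sum iteratively, first in $m_1$ (with $l_{11},l_{12},l_2,m_2$ fixed) and then in $m_2$, invoking Lemma \ref{lemmasum1} and then Lemma \ref{tenen} respectively. Applying Lemma \ref{lemmasum1} with $l \leftarrow l_{11}$ (the quantity $m_2 l_2$ playing the role of the fixed multiplier in the Hecke factor) turns the inner sum into
$$\frac{2+o(1)}{\Gamma(1/2)}\bigl(\log T^\xi/l_{11}\bigr)^{1/2} G_1\!\left(1;\tfrac{1}{2},m_2,l_2,l_{11}\right) + O\!\left(\frac{M_1(m_2,l_2,l_{11})}{\log^{1/2} T}\right).$$
Since the $\log^{1/2}(T^\xi/l_{11})$ factor is independent of $m_2$, it pulls out of the remaining sum.

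Next, substitute the main term into the $m_2$-sum. The resulting Dirichlet series
$$\sum_{m_2} \frac{d_{1/2}(l_{12}m_2)\l_f(l_{12}m_2)\,G_1(1;\tfrac{1}{2},m_2,l_2,l_{11})}{m_2^s}$$
is precisely the one computed just before the Claim, namely $\zeta^{1/2}(s)\,G_2(s;\tfrac{1}{2},l_{11},l_{12},l_2)$, with $G_2(s;\tfrac{1}{2},\cdot)$ bounded and holomorphic in a standard zero-free region by the Claim and the subconvexity of $L^{1/2}(\mathrm{sym}^2 f,s)$. A direct application of Lemma \ref{tenen} (with $z=1/2$) yields
$$\frac{2+o(1)}{\Gamma(1/2)}\,G_2\!\left(1;\tfrac{1}{2},l_{11},l_{12},l_2\right)\bigl(\log T^\xi/l_{12}\bigr)^{1/2} + O\!\left(\frac{M_2(l_{11},l_{12},l_2)}{\log^{1/2} T}\right).$$
Multiplying by the extracted $(2+o(1))\Gamma(1/2)^{-1}(\log T^\xi/l_{11})^{1/2}$ and noting that this prefactor is $O(\log^{1/2}T)$, the product of the $m_1$-main term with the $m_2$-error contributes $O(M_2(l_{11},l_{12},l_2))$, which by the stated bound is $O(d_{1/2}(l_1)|\l_f(l_1l_2)|M_2(l_1l_2))$.

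There remains the contribution of the error term from Lemma \ref{lemmasum1}, which, after weighting by $d_{1/2}(l_{12}m_2)\l_f(l_{12}m_2)/m_2$ and summing, is
$$\frac{1}{\log^{1/2}T}\sum_{m_2<T^\xi/l_{12}}\frac{|d_{1/2}(l_{12}m_2)\l_f(l_{12}m_2)|\,M_1(m_2,l_2,l_{11})}{m_2}.$$
This is where the function $M_3$ enters: by construction its local factors majorize the ratio of the absolute-value Dirichlet series generating the above sum to the Euler product in \eqref{prodsquare}, so the associated Dirichlet series is $\ll \zeta^{1/2}(s)L^{1/2}(\mathrm{sym}^2 f, s)$ times a bounded holomorphic function in a zero-free region with total local contribution $\leq M_3(l_{11},l_{12},l_2)$. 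Lemma \ref{tenen} then bounds the sum by $O(\log^{1/2}T\cdot M_3(l_{11},l_{12},l_2))$, and the extra $1/\log^{1/2}T$ makes this $O(d_{1/2}(l_1)|\l_f(l_1l_2)|M_3(l_1l_2))$. Since $M_3 \geq M_2$, combining the two error contributions gives the claimed remainder.

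The only real subtlety is the third step: verifying that the definition of $M_3$ correctly majorizes, uniformly in $m_2$, the product of the Lemma \ref{lemmasum1} error $M_1(m_2,l_2,l_{11})$ with the $m_2$-weight, so that a single application of Lemma \ref{tenen} yields the desired $O(d_{1/2}(l_1)|\l_f(l_1l_2)| M_3(l_1l_2))$ bound. This is essentially bookkeeping through the Euler factor definitions, exploiting the parity structure of $\nu_p(l_{12})+k$, $\nu_p(l_2)+k+k'$, $\nu_p(l_{11})+k'$ (as was used for $M_2$) to pull out $d_{1/2}(l_1)|\l_f(l_1l_2)|$, with the residual local factors satisfying \eqref{m2b}.
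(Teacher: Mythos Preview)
Your proposal is correct and follows essentially the same route as the paper: first apply Lemma~\ref{lemmasum1} to the $m_1$-sum, then feed the resulting main term into the $m_2$-sum and invoke Lemma~\ref{tenen} via the Claim, and finally bound the propagated $m_1$-error by a second application of Lemma~\ref{tenen} with the majorant $M_3$. The only cosmetic slip is calling the bound on $L^{1/2}(\mathrm{sym}^2 f,s)$ ``subconvexity'' (convexity suffices), and note that the paper writes the $m_2$-series with $G_1(1;m_2,l_2,l_{11})$ rather than $G_1(1;\tfrac12,m_2,l_2,l_{11})$ --- these differ by the constant $L^{1/2}(\mathrm{sym}^2 f,1)B(1)$, which is absorbed into $G_2$.
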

\subsubsection{The $l_1$ and $l_2$ sums}
We let $Z= \exp((\log N)^{2/3})$ and note that by Lemma \ref{rr2g} the contribution from $l_1,l_2 \geq Z$ to $S(g)$ is negligible. We first consider the contribution from the main term in Lemma \ref{dt} to (\ref{sg}), yielding
\begin{align*}
&\log T \sum_{\substack{l_1,l_2 < Z\\ (l_1,l_2)=1}} \frac{r(l_1l_2) \l_f(l_1l_2)}{\sqrt{l_1l_2}} \sum_{l_{11}l_{12}=l_1} G_2\left(1;\frac{1}{2},l_{11},l_{12},l_2\right) \sum_{\substack{g\leq \frac{T^{1-3\xi}}{\max(l_1,l_2)}\\ (g,l_1l_2)=1}} r(g)^2\l_f(g)^2 I_T \\
& \sim \log T \sum_{\substack{l_1,l_2 < Z\\ (l_1,l_2)=1}} \frac{r(l_1l_2) \l_f(l_1l_2)}{\sqrt{l_1l_2}} \sum_{l_{11}l_{12}=l_1} G_2\left(1;\frac{1}{2},l_{11},l_{12},l_2\right) \prod_{p\nmid l_1l_2} (1+r(p)^2\l_f(p)^2)I_T\\
&\sim  \log T \sum_{\substack{l_1,l_2 < Z\\ (l_1,l_2)=1}} \frac{r(l_1l_2) \l_f(l_1l_2)}{\sqrt{l_1l_2}} d(l_1) G_2(l_1,l_2)   \prod_p (1+r(p)^2\l_f(p)^2)I_T,
\end{align*}
where up to a constant $G_2(l_1,l_2)$ is given by
\begin{small}
\begin{align*}
& \prod_{p|l_1l_2} \frac{\displaystyle \sum_{k,k'=0}^\infty d_{\frac{1}{2}}(p^{\nu_p(l_1)+k})\l_f(p^{\nu_p(l_1)+k}) \l_f(p^{\nu_p(l_2)+k+k'})d_{\frac{1}{2}}(p^{k'})\l_f(p^{k'})p^{-k-k'}}{(1+r(p)^2\l_f(p)^2)\displaystyle\sum_{k,k'=0}^\infty d_{\frac{1}{2}}(p^k)\l_f(p^k)\l_f(p^{k+k'})d_{\frac{1}{2}}(p^{k'})\l_f(p^{k'})p^{-k-k'}}.
\end{align*}
\end{small}
Since for any $l_1, l_2, k, k'$, one of $\nu_p(l_1)+k, \nu_p(l_2)+k+k'$ and $k'$ must be odd, we may factorize $\l_f(l_1l_2)$ and obtain 
$$\l_f(l_1l_2)G_2(l_1,l_2) \geq d_{1/2}(l_1)\l_f(l_1l_2)^2 G(l_1l_2),$$
where $G$ is some multiplicative function supported on squarefree integers and satisfying
$$G(p)^{\pm 1} \leq (1+C_3p^{-\delta_3}) ,$$
for some absolute constant $C_3$ and some $\delta_3>0$. From Lemma \ref{rr2g} we have the following sequence of estimates:
\begin{equation}\label{estmt}
\sum_{l<Z} \frac{d(l)r(l)\l_f(l)^2}{\sqrt{l}}G(l) \sim \prod_p \left(1+\frac{2r(p)\l_f(p)^2}{\sqrt{p}} G(p)\right) \sim\prod_p \left(1+\frac{2r(p)\l_f(p)^2}{\sqrt{p}} \right) .
\end{equation}
The lower bound (\ref{unsignedmoment}) follows after dividing by the Normalizing Weight. 

We now consider the contribution from the error terms in Lemma \ref{dt}. We estimate
\begin{gather}\label{estet}
\sum_{\substack{l_1,l_2 <Z \\ (l_1,l_2)=1}} \frac{r(l_1l_2)|\l_f(l_1l_2)|}{\sqrt{l_1l_2}} \sum_{l_{11}l_{12}=l_1} d_{1/2}(l_1) |\l_f(l_1l_2)| M_{3}(l_1l_2) \prod_{p\nmid l_1l_2} (1+r(p)^2\l_f(p)^2)\\
\nonumber = \sum_{\substack{l_1,l_2 <Z \\ (l_1,l_2)=1}} \frac{r(l_1l_2)|\l_f(l_1l_2)|^2}{\sqrt{l_1l_2}}  \tilde{M}_3(l_1,l_2) \prod_p (1+r(p)^2\l_f(p)^2),
\end{gather}
where $\tilde{M}_3$ is a multiplicative function supported on squarefree integers defined on primes by
$$\tilde{M}_3(p) = \frac{M_3(p)}{(1+r(p)^2\l_f(p)^2)}.$$
We then may evaluate (\ref{estet}) as in (\ref{estmt}), however the contribution from this term is smaller as we save a factor of $\log T$ in the error term of Lemma \ref{dt}. 
\section{The signed moment}
In this section we prove (\ref{signedmoment}), by studying
\begin{small}
\begin{align*}
&\hbox{NW}.\bold{E}_{w_{T,\theta}}\left[L\left(f,\frac{1}{2}+it\right)\right]\\
&\sim \int_{\Gamma_\epsilon} L\left(f,\frac{1}{2}+s\right) A_{\frac{1}{2}}\left(\frac{1}{2}+s\right)A_{\frac{1}{2}}\left(\frac{1}{2}-s\right) R^*(s)R^*(-s) \frac{\Delta'(s)}{\Delta(s)-e^{2i\theta}} \frac{\mathrm{d}s}{\cos\left(\frac{iT-s}{H}\right)}.
\end{align*}
\end{small}
The contribution of the integral along the line $\Re(s)=1/2+\epsilon$ is
\begin{align*}
&-e^{-2i\theta}\sum_{n\geq 1} \frac{\l_f(n)}{n^{1/2}} \phi\left(\frac{n}{T^{2+\epsilon}}\right)\sum_{l_1,l_2<T^{1-3\xi}} \sum_{m_1,m_2<T^\xi} \frac{d_{1/2}(m_1)d_{1/2}(m_2)\l_f(m_1)\l_f(m_2)}{(m_1m_2)^{1/2}} \\
& \times r(l_1)r(l_2)\l_f(l_1)\l_f(l_2)  \int_{\Re(s)=\frac{1}{2}+\epsilon} \left(\frac{m_2l_2}{nm_1l_1}\right)^s \frac{\Delta'}{\Delta}(s) \frac{\Delta(s)}{1-e^{-2i\theta}\Delta(s)} \frac{\mathrm{d}s}{\cos\left(\frac{iT-s}{H}\right)},
\end{align*}
which by (\ref{third}) is negligible. We thus only care to estimate the integral along the line $\Re(s)=-1/2-\epsilon$. We make a change of variables $s\rightarrow -s$ and use the definition of $\Delta(s)$ to find
\begin{small}
\begin{align*}
& \int_{\Re(s)=\frac{1}{2}+\epsilon} L\left(f,\frac{1}{2}+s\right) A_{1/2}\left(\frac{1}{2}+s\right)A_{1/2}\left(\frac{1}{2}-s\right) R^*(s)R^*(-s)  \frac{\Delta'(-s)}{1-e^{2i\theta}\Delta(s)} \frac{\mathrm{d}s}{\cos\left(\frac{iT+s}{H}\right)}\\
&= \sum_{n\geq 1} \frac{\l_f(n)}{n^{1/2}} \phi\left(\frac{n}{T^{2+\epsilon}}\right) \sum_{m_1,m_2<T^\xi} \frac{d_{1/2}(m_1)d_{1/2}(m_2)\l_f(m_1)\l_f(m_2)}{(m_1m_2)^{1/2}} \\
&\times  \sum_{l_1,l_2<T^{1-2\xi}} r(l_1)r(l_2)\l_f(l_1)\l_f(l_2) \int_{\Re(s)=\frac{1}{2}+\epsilon} \left(\frac{m_2l_2}{nm_1l_1}\right)^s \frac{\Delta'(-s)}{1-e^{2i\theta}\Delta(s)} \frac{\mathrm{d}s}{\cos\left(\frac{iT+s}{H}\right)}.
\end{align*}
\end{small}
We write
$$(1-e^{2i\theta}\Delta(s))^{-1}= 1 + e^{2i\theta}\Delta(s) + \frac{e^{4i\theta}\Delta^2(s)}{1-e^{2i\theta}\Delta(s)}$$
to obtain the following three terms
\begin{align*}
\hbox{I}&:= \sum_{l_1,l_2<T^{1-3\xi}} r(l_1)r(l_2) \l_f(l_1)\l_f(l_2) \sum_{m_1,m_2<T^\xi} \frac{d_{1/2}(m_1)d_{1/2}(m_2)\l_f(m_1)\l_f(m_2)}{(m_1m_2)^{1/2}} \\
& \hspace{2 cm} \times \int_{\Re(s)=\frac{1}{2}+\epsilon} L\left(f,\frac{1}{2}+s\right) \left(\frac{m_2l_2}{m_1l_1}\right)^s \Delta'(-s) \frac{\mathrm{d}s}{\cos\left(\frac{iT+s}{H}\right)},
\end{align*}
\begin{align*}
\hbox{II}&:=e^{2i\theta} \sum_{n\geq1}\frac{\l_f(n)}{n^{1/2}}\phi\left(\frac{n}{T^{2+\epsilon}}\right)   \sum_{m_1,m_2 < T^\xi} \frac{d_{1/2}(m_1)d_{1/2}(m_2)\l_f(m_1)\l_f(m_2)}{(m_1m_2)^{1/2}}\\
& \times \sum_{l_1,l_2 < T^{1-2\xi}} r(l_1)r(l_2) \l_f(l_1)\l_f(l_2) \int_{\Re(s) = \frac{1}{2}+\epsilon} \left(\frac{m_2l_2}{nm_1l_1}\right)^s \frac{\Delta'}{\Delta}(s) \frac{\mathrm{d}s}{\cos\left(\frac{iT+s}{H}\right)},
\end{align*}
and 
\begin{align*}
\hbox{III}&:= e^{4i\theta} \sum_{n\geq 1} \frac{\l_f(n)}{n^{1/2}} \phi\left(\frac{n}{T^{2+\epsilon}}\right) \sum_{m_1,m_2<T^\xi} \frac{d_{1/2}(m_1)d_{1/2}(m_2) \l_f(m_1)\l_f(m_2)}{(m_1m_2)^{1/2}} \\
& \times \sum_{l_1,l_2 < T^{1-2\xi}} r(l_1)r(l_2) \l_f(l_1)\l_f(l_2) \int_{\Re(s)=\frac{1}{2}+\epsilon} \left(\frac{m_2l_2}{nm_1l_1}\right)^s  \frac{\Delta'(s)}{1-e^{2i\theta}\Delta(s)} \frac{\mathrm{d}s}{\cos\left(\frac{iT+s}{H}\right)}.
\end{align*}
We can see from (\ref{third}) that III is negligible, and we shall therefore focus solely on I and II. 
\subsection{Bounding II}
Using (\ref{second}), II is bounded up to negligible error term by
\begin{align*}
 & \sum_{g < T^{1-3\xi}} r(g)^2\l_f(g)^2 \sum_{\substack{l_1,l_2< T^{1-3\xi}/g \\ (l_1,l_2)=(l_1l_2,g)=1}} \l_f(l_1l_2)r(l_1l_2)\\
& \times  \sum_{\substack{m_1,m_2<T^\xi \\ nm_1l_1=m_2l_2}} \frac{\l_f(n)d_{1/2}(m_1)d_{1/2}(m_2)\l_f(m_1)\l_f(m_2)}{(nm_1m_2)^{1/2}} I_T.
\end{align*}
We let $l_{21}= (l_2,m_1), l_{22}= (l_2,n)$ and replace $m_1:=\frac{m_1}{l_{21}}, n:=\frac{n}{l_{22}}$ to reduce the problem to estimating
\begin{align*}
&\sum_{g<T^{1-3\xi}} r(g)^2\l_f(g)^2 \sum_{\substack{l_1,l_2 < T^{1-3\xi}/g \\ (l_1,l_2)=(l_1l_2,g)=1}} \frac{\l_f(l_1l_2) r(l_1l_2)}{\sqrt{l_1l_2}} \times \\
& \sum_{l_{21}l_{22}=l_2} \sum_{nm_1l_1,l_{21}m_1<T^\xi} \frac{\l_f(l_{22}n)d_{1/2}(l_{21}m_1)d_{1/2}(m_1nl_1) \l_f(l_{21}m_1)\l_f(m_1nl_1)}{nm_1}.
\end{align*}
We note that the innermost sum is bounded by
\begin{equation}\label{bsum}
\sum_{nl_1,l_{21}m_1<T^\xi} \frac{|\l_f(l_{22}n) d_{1/2}(l_{21}m_1)d_{1/2}(l_1m_1n)\l_f(l_{21}m_1)\l_f(m_1nl_1)|}{nm_1}.
\end{equation}
\subsubsection{Bounding (\ref{bsum})}
We study
\begin{small}
\begin{align*}
\sum_{n} \frac{|\l_f(l_{22}n)d_{1/2}(l_1m_1n)\l_f(m_1nl_1)|}{n^s}  &= \prod_p \left(\sum_{k=0}^\infty \l_f(p^k)^2d_{1/2}(p^k)p^{-ks}\right) G_3(s;l_1,m_1,l_{22}),
\end{align*}
\end{small}
where
\begin{small}
$$G_3(s;l_1,m_1,l_{22}):= \prod_{p|l_1m_1l_{22}} \frac{\displaystyle \sum_{k=0}^\infty |\l_f(p^{\nu_p(l_{22})+k})d_{\frac{1}{2}}(p^{\nu_p(l_1m_1)+k})\l_f(p^{\nu_p(l_1m_1)+k})|p^{-ks}}{\displaystyle\sum_{k=0}^\infty \l_f(p^k)^2d_{\frac{1}{2}}(p^k)p^{-ks}}.$$
\end{small}
By (\ref{prodsquare}), we conclude that 
$$\sum_{n} \frac{|\l_f(l_{22}n)d_{1/2}(l_1m_1n)\l_f(m_1nl_1)|}{n^s} = \zeta^{1/2}(s) G_3\left(s;\frac{1}{2}, l_1,m_1,l_{22}\right),$$
where
$$G_3\left(s;\frac{1}{2}, l_1,m_1,l_{22}\right)= G_3(s;l_1,m_1,l_{22}) L^{1/2}(\hbox{sym}^2f,s) B(s),$$
where $B(s)$ is given in (\ref{prodsquare}). Letting $M_3(l_1,m_1,l_{22})$ denote
$$\prod_{p | l_1m_1l_{22}} \sup_{\sigma> 1-c/\log (2+|\tau|)} \left|\frac{\sum_{k=0}^\infty |\l_f(p^{\nu_p(l_{22})+k})d_{\frac{1}{2}}(p^{\nu_p(l_1m_1)+k})\l_f(p^{\nu_p(l_1m_1)+k})|p^{-ks}}{\sum_{k=0}^\infty \l_f(p^k)^2d_{\frac{1}{2}}(p^k)p^{-ks}}\right|,$$
we use Lemma \ref{tenen} to conclude that for $l_1 < T^{\xi-\epsilon},$ we have 
\begin{align}\label{fsum}
\nonumber\sum_{l_1n<T^\xi} \frac{|\l_f(l_{22}n)d_{1/2}(l_1m_1n)\l_f(m_1nl_1)|}{n}& \ll (\log T)^{1/2} G_3\left(1;\frac{1}{2},l_1,m_1,l_{22}\right)\\
& + O \left(\frac{M_{3}}{(\log T)^{1/2}}\right).
\end{align}
We estimate the contribution from the first term of (\ref{fsum}); the contribution of the second term is analogous. We thus study
\begin{align*}
&\sum_{m_1} \frac{d_{1/2}(l_{21}m_1)|\l_f(l_{21}m_1)| G_3\left(1;l_1,m_1,l_{22}\right)}{m_1^s}\\
&= \prod_p \left(\frac{\sum_{k,k'=0}^\infty |d_{1/2}(p^k) \l_f(p^k) \l_f(p^{k'}) d_{1/2}(p^{k+k'}) \l_f(p^{k+k'})|p^{-k'-ks}}{\sum_{k=0}^\infty \l_f(p^k)^2 d_{1/2}(p^k)p^{-k}}\right) G_4(s;l_1,l_2),
\end{align*}
where 
$$G_4(s;l_1,l_2)=\prod_{p| l_1l_2}  G_{4,p}(s;l_1,l_2),$$
and $G_{4,p}(s;l_1,l_2)$ is given by 
\begin{small}
$$ \frac{\displaystyle \sum_{k,k'=0}^\infty |d_{\frac{1}{2}}(p^{\nu_p(l_{21})+k})\l_f(p^{\nu_p(l_{21})+k}) \l_f(p^{\nu_p(l_{22})+k'}) d_{\frac{1}{2}}(p^{\nu_p(l_1)+k+k'}) \l_f(p^{\nu_p(l_1)+k+k'}) |p^{-k'-ks}}{\displaystyle\sum_{k,k'=0}^\infty |d_{\frac{1}{2}}(p^k) \l_f(p^k) \l_f(p^{k'}) d_{\frac{1}{2}}(p^{k+k'}) \l_f(p^{k+k'})|p^{-k'-ks}}.$$
\end{small}
\begin{claim}
Let $s= \sigma +i\tau$; there exists a function, $D$, bounded and holomorphic in the region $\sigma>1-c/\log |\tau|$ such that
\begin{small}
 $$\prod_p \frac{\displaystyle \sum_{k,k'=0}^\infty \frac{|d_{\frac{1}{2}}(p^k) \l_f(p^k) \l_f(p^{k'}) d_{\frac{1}{2}}(p^{k+k'}) \l_f(p^{k+k'})|}{p^{k'+ks}}}{\displaystyle \sum_{k=0}^\infty \l_f(p^k)^2 d_{\frac{1}{2}}(p^k)p^{-k}} = \zeta^{1/4}(s) L^{1/4}(sym^2f,s) D(s).$$
 \end{small}
\end{claim}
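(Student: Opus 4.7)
The plan is to mirror the proof of the preceding claim of this section: expand the local Euler factor in powers of $p^{-1}$ and $p^{-s}$, identify the leading behaviour with that of the local factor $E_p$ of $\zeta^{1/4}(s) L^{1/4}(\hbox{sym}^2 f,s)$, and verify that the residual Euler product $D(s) = \prod_p (N_p/D_p)/E_p$ defines a bounded, holomorphic function in the prescribed region.

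More precisely, I would first read off the contributions to the numerator
$$N_p(s) := \sum_{k,k'\geq 0} \frac{|d_{1/2}(p^k)\l_f(p^k)\l_f(p^{k'})d_{1/2}(p^{k+k'})\l_f(p^{k+k'})|}{p^{k'+ks}}$$
coming from the pairs $(k,k') \in \{(0,0),(0,1),(1,0)\}$; all higher-order pairs fall into one of the remainder classes $O(p^{-2+\epsilon})$, $O(p^{-1-s+\epsilon})$ or $O(p^{-2s+\epsilon})$ in view of the Kim--Sarnak bound $|\l_f(p^j)| \ll p^{j\epsilon}$. This yields
$$N_p = 1 + \frac{\l_f(p)^2}{2p} + \frac{\l_f(p)^2}{4p^s} + O(p^{-2+\epsilon}) + O(p^{-1-s+\epsilon}) + O(p^{-2s+\epsilon}).$$
Since the denominator satisfies $D_p = 1 + \tfrac{1}{2}\l_f(p)^2 p^{-1} + O(p^{-2+\epsilon})$, dividing cancels the $p^{-1}$ contributions and gives
$$\frac{N_p}{D_p} = 1 + \frac{\l_f(p)^2}{4}p^{-s} + O(p^{-2+\epsilon}) + O(p^{-1-s+\epsilon}) + O(p^{-2s+\epsilon}).$$

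Next, using $\alpha_p + \beta_p = \l_f(p)$ and $\alpha_p\beta_p = 1$, so that $\alpha_p^2+\beta_p^2 = \l_f(p)^2 - 2$, a direct expansion of the corresponding local factor of $\zeta^{1/4}(s) L^{1/4}(\hbox{sym}^2 f,s)$ yields $E_p = 1 + \tfrac{\l_f(p)^2}{4} p^{-s} + O(p^{-2s})$, so that the leading $p^{-s}$ coefficient matches the expansion of $N_p/D_p$ above, and consequently the quotient $D_p(s) := (N_p/D_p)/E_p$ has the form $D_p(s) = 1 + O(p^{-2+\epsilon}) + O(p^{-1-s+\epsilon}) + O(p^{-2s+\epsilon})$, with implied constants uniform in $\tau = \Im(s)$.

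The final step is to set $D(s) := \prod_p D_p(s)$ and verify holomorphy and boundedness in $\sigma > 1 - c/\log(2+|\tau|)$. The $O(p^{-2+\epsilon})$ tail is unconditionally summable, and the $O(p^{-1-s+\epsilon})$ and $O(p^{-2s+\epsilon})$ tails are controlled uniformly in that region by the standard zero-free region estimates for $\zeta(1+s)$ together with the absolute convergence of $\sum_p p^{-2s}$ there. The only mildly technical point, exactly as for the function $C(s)$ introduced in the analogous preceding claim, is the uniform handling of the $p^{-1-s}$ contribution when $|\tau|$ grows, which is routine once the zero-free region is fixed.
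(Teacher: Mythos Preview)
Your proposal is correct and follows the same approach as the paper: expand each local factor to first order in $p^{-s}$, obtaining the term $d_{1/2}(p)^2\lambda_f(p)^2p^{-s}=\tfrac14\lambda_f(p)^2p^{-s}$, match it with the local factor of $\zeta^{1/4}(s)L^{1/4}(\hbox{sym}^2f,s)$, and observe that the residual Euler product converges. The paper's own proof is a terse two-line version of exactly this computation (and in fact your explicit tracking of the error classes $O(p^{-2+\epsilon})$, $O(p^{-1-s+\epsilon})$, $O(p^{-2s+\epsilon})$ is more careful than the single $O(p^{-s-1/2})$ recorded there); one small remark is that invoking ``zero-free region estimates for $\zeta(1+s)$'' for the $p^{-1-s}$ tail is unnecessary, since $\sum_p p^{-1-\sigma}$ already converges absolutely throughout the region.
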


\begin{proof}
We have 
\begin{align*}
&\prod_{p} \frac{\sum_{k,k'=0}^\infty d_{1/2}(p^k) \l_f(p^k) \l_f(p^{k+k'}) d_{1/2}(p^{k+k'}) \l_f(p^{k'}) p^{-ks-k'}}{\sum_{k=0}^\infty \l_f(p^k)^2d_{1/2}(p^k) p^{-k}}\\
&= \prod_p \left(1 + d_{1/2}(p)^2 \l_f(p)^2 p^{-s} + O(p^{-s-\frac{1}{2}}) \right),
\end{align*}
and the claim follows immediately.
\end{proof}
We let $M_4(l_1l_2)$ be a positive multiplicative function, supported on squarefree integers, such that
$$M_4(p)^{\pm 1} \leq (1+C_4p^{-\delta_4}),$$
for some constant $C_4>0$ and some $\delta_4 > 0$, chosen so that 
$$\prod_{p|l_1l_2} \sup_{\Re(s)> 1- c/\log |\tau|} G_{4,p}(s;l_1,l_2) \leq |\l_f(l_1l_2)| d_{1/2} (l_{21}l_1) M_4(l_1l_2),$$
and use Lemma \ref{tenen} to conclude the following lemma.
\begin{lemma}\label{flemma}
For $l_1,l_2<T^{\xi-\epsilon}$, we have 
\begin{align*}
&\sum_{nl_1,l_{21}m_1<T^\xi} \frac{|\l_f(l_{22}n) d_{1/2}(l_{21}m_1)d_{1/2}(l_1m_1n)\l_f(l_{21}m_1)\l_f(m_1nl_1)|}{nm_1}\\
&\hspace{5 mm} \ll(\log T)^{3/4}  G_4(1;l_1,l_2) + O\left(M_{4} (\log T)^{-1/4}  \right) 
\end{align*}
\end{lemma}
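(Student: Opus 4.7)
The plan is to iterate Lemma \ref{tenen}: first apply it to the inner $n$-sum, which has already been carried out and recorded in (\ref{fsum}), and then apply it to the outer $m_1$-sum using the Dirichlet series identity furnished by the preceding claim.

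By (\ref{fsum}), the $n$-summation splits as a main contribution $(\log T)^{1/2} G_3(1;1/2,l_1,m_1,l_{22})$ plus an error $O(M_3/(\log T)^{1/2})$, and I would treat the two contributions in parallel. For the main contribution, the Dirichlet series in $m_1$ is, by the preceding claim,
$$\sum_{m_1 \geq 1} \frac{d_{1/2}(l_{21}m_1)|\lambda_f(l_{21}m_1)| G_3(1;l_1,m_1,l_{22})}{m_1^s} = \zeta^{1/4}(s)\, L^{1/4}(\mathrm{sym}^2 f, s)\, D(s)\, G_4(s;l_1,l_2).$$
In the zero-free region $\sigma > 1 - c/\log(2+|\tau|)$, this series has type $\mathcal{T}(1/4, w; c_0, \delta, M)$ in the sense of Lemma \ref{tenen}, with $M$ bounded up to an absolute factor by $|\lambda_f(l_1l_2)|\, d_{1/2}(l_{21}l_1)\, M_4(l_1l_2)$, using the boundedness of $D$, the polynomial growth bound $L^{1/2}(\mathrm{sym}^2 f, \sigma + i\tau) \ll |\tau|^\delta$ in that region, and the definition of $M_4$ as a majorant for the ramified factors $G_{4,p}$.

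Lemma \ref{tenen} with $z = 1/4$ then gives, for $l_{21} \leq T^{\xi-\epsilon}$,
$$\sum_{m_1 \leq T^\xi/l_{21}} d_{1/2}(l_{21}m_1)\, |\lambda_f(l_{21}m_1)|\, G_3(1;1/2,l_1,m_1,l_{22}) \ll \frac{T^\xi}{l_{21}}\, (\log T)^{-3/4}\, G_4(1;l_1,l_2),$$
and Abel summation to insert the weight $1/m_1$ upgrades the log-exponent by one (since $\int_1^X (\log u)^{-3/4}\,du/u = 4(\log X)^{1/4}$), so
$$\sum_{l_{21}m_1 < T^\xi} \frac{d_{1/2}(l_{21}m_1)\, |\lambda_f(l_{21}m_1)|\, G_3(1;1/2,l_1,m_1,l_{22})}{m_1} \ll (\log T)^{1/4}\, G_4(1;l_1,l_2).$$
Multiplying by the $(\log T)^{1/2}$ prefactor yields the advertised main term $(\log T)^{3/4}\, G_4(1;l_1,l_2)$. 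Replacing $G_3(1;\cdot)$ by the majorant $M_3$ and repeating the identical argument produces, using instead the $(\log T)^{-1/2}$ prefactor from the error in (\ref{fsum}), the error bound $M_4(\log T)^{-1/4}$.

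The principal obstacle is to verify, uniformly in $(l_1, l_{21}, l_{22})$, that the outer Dirichlet series satisfies the hypotheses of Lemma \ref{tenen} with the $l$-dependence of the ambient constant absorbed cleanly into $|\lambda_f(l_1l_2)|\, d_{1/2}(l_{21}l_1)\, M_4(l_1l_2)$. This uniformity is precisely the content of the definition of $M_4$ as a supremum of the ramified local factors $G_{4,p}$ over the zero-free region; once this bookkeeping is confirmed, the argument unfolds along the same template used in the proof of Lemma \ref{dt}.
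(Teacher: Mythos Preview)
Your proposal is correct and follows essentially the same route as the paper: feed the output of (\ref{fsum}) into the $m_1$-sum, use the claim to recognise the resulting Dirichlet series as $\zeta^{1/4}$ times a function analytic and bounded in the zero-free region (with the ramified primes controlled by $M_4$), and apply Lemma~\ref{tenen} with $z=1/4$ followed by partial summation. The paper simply states ``use Lemma~\ref{tenen} to conclude'' after setting up $G_4$ and $M_4$; your write-up spells out the Abel summation and the log-exponent bookkeeping, but there is no substantive difference in method.
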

\subsubsection{Estimating the outer sums} We estimate the contribution from the first term of Lemma \ref{flemma} to II, the second term being treated similarly. We notice that letting $Z=\exp(\log^{2/3} N)$  the contribution from $\max(l_1,l_2) > Z $ is negligible, and thus only care to estimate
\begin{align*}
&\sum_{g<T^{1-3\xi}} r(g)^2\l_f(g)^2 \sum_{\substack{l_1,l_2 < T^{1-3\xi}/g, Z \\ (l_1,l_2)=(l_1l_2,g)=1}} \frac{\l_f(l_1l_2)r(l_1l_2)}{\sqrt{l_1l_2}} \sum_{l_{21}l_{22}=l_2} G_4(1; l_1,l_2)\\
&\ll \sum_{g<T^{1-3\xi}} r(g)^2\l_f(g)^2 \sum_{\substack{l_1,l_2 < T^{1-3\xi}/g, Z \\ (l_1,l_2)=(l_1l_2,g)=1}} \frac{\l_f(l_1l_2)r(l_1l_2)}{\sqrt{l_1l_2}}d_{1/2}(l_1)   d_{3/2}(l_2) \tilde{G}_4(l_1l_2),
\end{align*}
where $\tilde{G}_4$ is a multiplicative function supported on squarefree integers such that
$$\tilde{G}_4(l) = \prod_{p|l} (|\l_f(p^{\nu_p(l)})| + O(p^{-\delta_5}),$$
for some $\delta_5 >0$. By Lemma \ref{r2g2} and \ref{rr2g} we obtain that the contribution of II is bounded by
\begin{align*}
 &(\log T)^{3/4} \prod_p (1+r(p)^2\l_f(p)^2) \prod_p \left(1+ \frac{\l_f(p)^2 r(p)}{2\sqrt{p}}\right) \prod_p\left(1+\frac{3\l_f(p)^2 r(p)}{2\sqrt{p}}\right) I_T\\
& \ll (\log T)^{3/4} \prod_p (1+r(p)^2\l_f(p)^2) \prod_p \left(1+ \frac{2\l_f(p)^2 r(p)}{\sqrt{p}}\right)I_T,
\end{align*}
so that dividing by $NW$ we obtain an acceptable upper bound towards (\ref{signedmoment}).
\subsection{Estimating I}
We have reduced the proof of the upper bound of Proposition \ref{prop1} to bounding I. We will do so by showing it is bounded by II. By the approximate functional equation \cite[p. 98]{IwaniecKowalski},  we have that 
\begin{align*}
L\left(f,\frac{1}{2}+i\nu\right) &= \sum_n \frac{\l_f(n)}{n^{1/2+i\nu}}  V_{\nu}(n) + \Delta(i\nu) \sum_n \frac{\l_f(n)}{n^{1/2-i\nu}} V_{-\nu}(n),
\end{align*}
where 
$$V_{\nu}(y):= \frac{1}{2\pi i} \int_{(3)} y^{-u}e^{u^2} \frac{L_\infty\left(\frac{1}{2}+i\nu+u\right)}{L_\infty\left(\frac{1}{2}+i\nu\right)} \frac{\mathrm{d}u}{u}.$$
We may thus write
\begin{align*}
\int_\R L\left(f,\frac{1}{2}+i\nu\right)\left(\frac{m_2l_2}{m_1l_1}\right)^{i\nu} \Delta'(-i\nu) \frac{\mathrm{d}\nu}{\cosh\left(\frac{T+\nu}{H}\right)}= S_1 + S_2,
\end{align*}
where
$$S_1 := \sum_n \frac{\l_f(n)}{n^{1/2}} \int_\R\left(\frac{m_2l_2}{m_1l_1n}\right)^{i\nu} V_\nu(n) \Delta'(-i\nu) \frac{\mathrm{d}\nu}{\cosh\left(\frac{T+\nu}{H}\right)},$$
and 
$$S_2:= \sum_n \frac{\l_f(n)}{n^{1/2}} \int_\R \left(\frac{m_2l_2n}{m_1l_1}\right)^{i\nu}V_{-\nu}(n) \Delta(i\nu)\Delta'(-i\nu) \frac{\mathrm{d}\nu}{\cosh\left(\frac{T+\nu}{H}\right)}.$$
We note that by the support of $V_{\nu}$ \cite[p. 100]{IwaniecKowalski}, we only need to consider the contribution from $|\nu| \asymp T$ and $n \ll T^{1+\epsilon}$, for both $S_1$ and $S_2$ . We also note that in the definition of $V_\nu$ only the contribution from $u\ll T^\epsilon$ is non-negligible. We thus estimate 
\begin{align*}
S_1& = \sum_{n\ll T^{1+\epsilon}} \frac{\l_f(n)}{2\pi n^{1/2}} \int_{u \ll T^\epsilon} \frac{e^{(3+iu)^2}}{n^{3+iu} (3+iu)} K_T(n;m_1,m_2,l_1,l_2,u)\mathrm{d}u+O(T^{-A}), 
\end{align*}
where $K_T(n;m_1,m_2,l_1,l_2,u)$ is defined to be
$$\int_\R \left(\frac{m_2l_2}{m_1l_1n}\right)^{i\nu}\frac{L_\infty\left(\frac{7}{2}+i(\nu+u)\right)}{L_\infty\left(\frac{1}{2}+i\nu\right)} \Delta'(-i\nu) W(\nu) \frac{\mathrm{d}\nu}{\cosh\left(\frac{T+\nu}{H}\right)},$$
and $W$ is a smooth function supported on $[-2T,-T/2]$ such that $W^{(j)}(x) \ll x^{-j}$ for all $j\geq 0$.

We recall that by (\ref{logder}),
$$\Delta'(-i\nu) = -\frac{\Delta(-i\nu)}{2} \log\left(\frac{\frac{1}{16}+\frac{1}{4}((r+\nu)^2+(\nu-r)^2)+(\nu^2-r^2)^2}{16\pi^4}\right)+O(|\nu|^{-1+\epsilon}),$$ 
and 
\begin{align*}
\frac{L_\infty\left(\frac{7}{2}+i(\nu+u)\right)}{L_\infty\left(\frac{1}{2}+i\nu\right)}\Delta(-i\nu) &= c_1\pi^{-2i\nu}\left|\frac{r+u+\nu}{2e}\right|^{i\frac{r+u+\nu}{2}+\frac{5}{4}} \left|\frac{\nu+u-r}{2e}\right|^{i\frac{\nu+u-r}{2}+\frac{5}{4}}\\
&\times  \left|\frac{r+\nu}{2e}\right|^{i\frac{(r+\nu)}{2}+\frac{1}{4}} \left|\frac{\nu-r}{2e}\right|^{i\frac{(\nu-r)}{2}+\frac{1}{4}} e^{\frac{\pi u}{2}}(1+ O(|\nu|^{-1})),
\end{align*}
for some absolute constant $c_1$. We then write 
$$K_T(n;m_1,m_2,l_1,l_2,u)= \int_\R g_T(\nu)e(f_T(\nu)) \mathrm{d}\nu,$$
where
\begin{align*}
g_T(\nu) &=  c_2 \frac{\log\left(\frac{\frac{1}{16}+\frac{1}{4}((r+\nu)^2+(\nu-r)^2)+(\nu^2-r^2)^2}{16\pi^4}\right)+ O(|\nu|^{-1+\epsilon})}{\cosh\left(\frac{T+\nu}{H}\right)}W(\nu)\\
&\times \left|\frac{r+u+\nu}{2}\right|^{5/4}\left|\frac{\nu+u-r}{2}\right|^{5/4}\left|\frac{r+\nu}{2}\right|^{1/4}\left|\frac{\nu-r}{2}\right|^{1/4}e^{\pi u},
\end{align*}
fore some absolute constant $c_2$, and
\begin{align*}
2\pi f_T(\nu) &= \nu \log\left(\frac{m_2l_2}{nm_1l_1\pi^2}\right) + \frac{r+\nu}{2}\log\left|\frac{r+\nu}{2e}\right|+\frac{\nu-r}{2}\log\left|\frac{\nu-r}{2e}\right|\\
&+ \frac{r+u+\nu}{2}\log\left|\frac{r+u+\nu}{2e}\right| + \frac{\nu+u-r}{2}\log\left|\frac{\nu+u-r}{2e}\right|.
\end{align*}
We now wish to run a stationary phase analysis on $K_T$, and we therefore compute
\begin{align*}
2\pi f'_T(\nu)&= \log\left(\frac{m_2l_2}{nm_1l_1\pi^2}\right) +\frac{1}{2} \log\left|\frac{r+\nu}{2e}\right| +\frac{1}{2}\log\left|\frac{\nu-r}{2e}\right| + 2\\
&+ \frac{1}{2}\log\left|\frac{r+u+\nu}{2e}\right| + \frac{1}{2} \log\left|\frac{\nu+u-r}{2e}\right|.
\end{align*}
We note that in the support of the integral, $|f_T'(\nu)| \geq 1$ as otherwise we would require to have $m_2l_2T^2\asymp nm_1l_1$, however the right hand side is always bounded by $T^{2-\epsilon}$. By repeated integration by parts, we find 
$$K_T(n;m_1,m_2,l_1,l_2,u) \ll e^{\pi u} T^{-A},$$
so that the contribution from $S_1$ is negligible. Similarly, we now study
$$S_2= \sum_{n\ll T^{1+\epsilon}} \frac{\l_f(n)}{2\pi n^{1/2}} \int_{u \ll T^\epsilon} \frac{e^{(3+iu)^2}}{(\pi n)^{3+iu}(3+iu)} \tilde{K}_T(n;m_1,m_2,l_1,l_2,u) \mathrm{d}u +O(T^{-A}),$$
where $\tilde{K}_T(n;m_1,m_2,l_1,l_2,u)$ is defined to be
$$\int_\R \left(\frac{m_2l_2n}{m_1l_1}\right)^{i\nu} \frac{L_\infty\left(\frac{7}{2}+i(u-\nu)\right)}{L_\infty\left(\frac{1}{2}-i\nu\right)} \Delta(i\nu)\Delta'(-i\nu) \frac{W(\nu)\mathrm{d}\nu}{\cosh\left(\frac{T+\nu}{H}\right)}.$$
We write
$$\tilde{K}_T(n;m_1,m_2,l_1,l_2,u) = \int_\R \tilde{g}_T(\nu) e(\tilde{f}_T(\nu)) \mathrm{d}\nu,$$
where up to a constant, $\tilde{g}_T(\nu)$ is given by
\begin{align*}
 &e^{-\frac{\pi u}{2}} \left|\frac{u+r-\nu}{2}\right|^{5/4} \left|\frac{u-\nu-r}{2}\right|^{5/4} \left|\frac{r-\nu}{2}\right|^{1/4} \left|\frac{r+\nu}{2}\right|^{1/4}\\
  &\times \log\left(\frac{\frac{1}{16}+\frac{1}{4}((r+\nu)^2+(\nu-r)^2)+(\nu^2-r^2)^2}{16\pi^4}\right) \frac{W(\nu)}{\cosh\left(\frac{T+\nu}{H}\right)}(1+ O(|\nu|^{-1+\epsilon})),
  \end{align*}
and
\begin{align*}
2\pi \tilde{f}_T(\nu)&= \nu \log\left(\frac{m_2l_2n}{m_1l_1}\right) + \frac{u+r-\nu}{2} \log\left|\frac{u+r-\nu}{2e}\right| + \frac{u-\nu-r}{2} \log\left|\frac{u-\nu-r}{2e}\right|\\
& +\frac{\nu-r}{2} \log\left|\frac{r-\nu}{2e}\right| +\frac{\nu+r}{2}\log\left|\frac{\nu+r}{2e}\right|.
\end{align*}
We compute
$$2\pi \tilde{f}_T'(\nu) = \log\left(\frac{m_2l_2n}{m_1l_1}\right) -\frac{1}{2} \left(\log\left(1+\frac{u}{r-\nu}\right) + \log\left(1-\frac{u}{\nu+r}\right)\right),$$
We thus see that if $m_2l_2n \not= m_1l_1,$ then $\tilde{f}_T'(\nu) \gg T^{\xi-1}$. Computing higher derivatives, one finds that $\tilde{f}_T^{(j)} \ll T^{\epsilon - j}$, so that by \cite[Lemma 8.1]{BKY} one concludes that 
$$\tilde{K}_T(n;m_1,m_2,l_1,l_2,u) \ll O(T^{-A}).$$
Using the bound $V_{-\nu}(n) \ll 1$, we consider thus have 
$$S_2\ll \delta_{m_2l_2n=m_1l_1} \frac{|\l_f(n)|}{n^{1/2}}   I_{T} + O(T^{-A}).$$
The contribution from the main term thereof to I is therefore bounded by \\ \\
\resizebox{12.5 cm}{!}{$\displaystyle \sum_{l_1,l_2 < T^{1-3\xi}} r(l_1)r(l_2)|\l_f(l_1)\l_f(l_2)| \sum_{\substack{m_1,m_2<T^\xi \\ m_2l_2n=m_1l_1}} \frac{d_{\frac{1}{2}}(m_1)d_{\frac{1}{2}}(m_2) |\l_f(m_1)\l_f(m_2)\l_f(n)|}{(nm_1m_2)^{1/2}} I_T,$}\\ \\
which is the same sum as that appearing in II. This concludes the proof of the upper bound of Proposition \ref{prop1}.

\bibliography{refs}
\bibliographystyle{plain} 

\end{document}